\title[The inertial Kuramoto model on strongly connected network]{Synchronization of second-order Kuramoto model with frustration on strongly connected digraph}
\author[Zhu]{Tingting Zhu \textsuperscript{\MakeLowercase{a,b}}}
\author[Zhang]{Xiongtao Zhang \textsuperscript{\MakeLowercase{c},*}}
\newtheorem{theorem}{Theorem}[section]
\newtheorem{lemma}{Lemma}[section]
\newtheorem{remark}{Remark}[section]
\newtheorem{definition}{Definition}[section]
\def\charf {\mbox{{\text 1}\kern-.30em {\text l}}}
\begin{document}

\date{\today}

\subjclass{34D05, 34D06, 34C15, 92D25} 
\keywords{synchronization, strongly connected digraph, Kuramoto model, inertia, frustration, exponential rate}

\thanks{\textsuperscript{a} School of Mathematics and Statistics, Hefei University, Hefei, 230601, China (ttzhud201880016@163.com)}
\thanks{\textsuperscript{b} Key Laboratory of Applied Mathematics and Artificial Intelligence Mechanism, Hefei University, Hefei, 230601, China}
\thanks{\textsuperscript{c} School of Mathematics and Statistics, Wuhan University, Wuhan, 430072, China (zhangxt@whu.edu.cn)}
\thanks{\textsuperscript{*} Corresponding author. }

\begin{abstract}
We study the emergent behavior of a second-order Kuramoto-type model with frustration effect on a strongly connected digraph. The main challenge arises from the lack of symmetry in this system, which renders standard approaches for symmetric models, such as the gradient-flow method and classical $\ell^p$ or $\ell^\infty$-type energy estimates, ineffective. To address these difficulties, our primary contribution is the development of time-dependent weighted $\ell^1$-type energy estimates to establish the hypo-coercivity of the frequency diameter. Specifically, we construct novel energy functions incorporating convex combinations of phases, frequencies, accelerations, and jerks, which are shown to be dissipative and capable of bounding both phase and frequency diameters. This framework enables us to demonstrate the emergence of frequency synchronization with an exponential convergence rate.
\end{abstract}
\maketitle \centerline{\date}

\section{Introduction}\label{sec:1}
\vspace{0.5cm}
Synchronization is a widespread phenomenon observed in nature and has been extensively studied across various scientific communities, including engineering, physics, and biology \cite{B-J-Y84, B-B66, E91, E-K91, M-S90}. A variety of mathematical models have been introduced to illustrate this phenomenon, and the rigorous mathematical treatment was pioneered by Winfree \cite{W67} and Kuramoto \cite{K75}. Thereafter, the Kuramoto model has got considerable attention and serve as a prototype for  synchronization. There have been extensive studies on the Kuramoto model from various perspectives \cite{C-H-J-K12, D-H-K20, H-K-L14, H-K-P18, H-K-R16, H-J-K-U20, S-K86, W-S97}. In this paper, we focus on the Kuramoto model with inertia effect and phase shift on a strongly connected network.

To set up the stage, we consider a digraph $G = (V,E)$ composed of a finite vertex set $V= \{1,2,\cdots, N\}$ and an edge set $E \subset V \times V$. For each vertex $i$, we denote by  $\mathcal{N}_i = \{j: (j,i) \in E\}$ the neighbor set of the vertex $i$. The network topology can also be registered by its $(0,1)$-adjacency matrix $(\chi_{ij})$ defined as below:
\begin{equation*}
\chi_{ij} = 
\begin{cases}
\displaystyle 1, \qquad \text{if vertex} \ j \ \text{directly influences vertex} \ i,\\
\displaystyle 0, \qquad \text{otherwise}.
\end{cases}
\end{equation*}
Note that the neighbor set $\mathcal{N}_i$ can also be expressed as $\mathcal{N}_i = \{j : \chi_{ij} > 0\}$. Throughout the paper, we exclude a self loop, i.e., $i \notin \mathcal{N}_i$ for all $1 \le i \le N$.
Kuramoto oscillators are assumed to be located at vertices and interact with each other through the underlying network topology. Let $\theta_i = \theta_i(t)$ and $\Omega_i$ be the phase and natural frequency of the $i$-th Kuramoto oscillators, respectively. Then the dynamics of Kuramoto model with inertia and frustration on a general digraph is given by the following ordinary differential system:
\begin{equation}\label{KMI}
\begin{cases}
\displaystyle m \ddot{\theta}_i(t) + \dot{\theta}_i(t) = \Omega_i  + \kappa \sum_{j \in \mathcal{N}_i} \sin (\theta_j(t) - \theta_i(t) + \alpha), \quad t > 0, \quad i \in V,\\
\displaystyle (\theta_i(0), \omega_i(0)) = (\theta_{i0}, \omega_{i0}),
\end{cases}
\end{equation}
where $\kappa > 0$ represents the coupling strength, and $m > 0, 0 < \alpha < \frac{\pi}{2}$ denotes the inertia and frustration effects, respectively. It is well known that the power network systems are closely related to the second-order Kuramoto-type model, and the transient stability of power grids can be regarded as a synchronization problem for generator rotor angles aiming to restore synchronism after a transient disturbance. Due to the potential application, the rich dynamics of second-order model has been widely investigated in literatures \cite{C-C-C95, C-H-N13, D-B12, D-C-B13, G-W-Y-Z-X-Y20,  L-X-Y14, T-L-O97, Ta-L-O97, W-Q17, W-C21}.

We begin by reviewing recent studies on the second-order Kuramoto model. When the network is symmetric and the frustration is zero, the system is completely symmetric. In this case, the emergence of synchronization can be proven by proper energy methods \cite{C-H-Y11,H-J-K19} or gradient flow techniques \cite{C-L-H-X-Y14, C-L19,L-X-Y14}. However, these methods cannot be directly applied when frustration is introduced, as it breaks the system’s symmetry. In \cite{Ha-K-L14,L-H16}, the authors investigated the frustrated second-order model on a symmetric network and rigorously proved the emergence of the phase-locked states when the frustration is small. Further related works can be found in \cite{D-Z-P-L-J18, F-B-J-K23}. Broadly speaking, these results suggest that a frustrated system on a symmetric network can be viewed as a perturbation of the original symmetric system when the frustration is small. A natural question then arises:
 \vspace{0.2cm}

  Does synchronization occur in \eqref{KMI} when the interaction network is asymmetric?
 \vspace{0.2cm}
 
\noindent Focusing on this question, the authors of the present paper recently investigated the second-order model on an asymmetric network \cite{Z-Z25}. However, the method does not extend to networks with greater depth. In fact, to the best of our knowledge, there is still no rigorous proof establishing synchronization in the second-order Kuramoto model with frustration on a more general network. 
 
The main purpose of the present paper is to demonstrate the emergence of synchronization in the system \eqref{KMI} on a strongly connected network for small inertia and frustration, provided that the initial phase diameter is less than $\pi$. Our motivation is twofold. First, strong connectivity is a typical assumption for the asymmetric networks and serves as a crucial step toward extending our results to more generally connected networks (see Remark  \ref{rem-ge}). Second, it is already clear that synchronization occurs exponentially in the first-order Kuramoto model (whether symmetric or asymmetric) when the oscillators are initially distributed within a half circle \cite{Z-Z23}. Then, it is natural to expect a similar phenomenon in the second-order system \eqref{KMI} for small inertia and frustration.

However, establishing the emergence of synchronization in system \eqref{KMI} presents two main challenges. First, the analysis requires estimates of the evolution of the phase and frequency diameters, which are piecewise smooth but only Lipschitz continuous. As a consequence, their dynamics are not governed by a standard second-order differential equation (or inequality), so that standard analytical approaches are inapplicable. Second, the asymmetry of the interaction network results in non-uniform dissipation of the phase and frequency diameters. For instance, $\dot{D}_\theta(t)$ may be zero at certain times, so that it is difficult to establish a Grownwall type inequality for the phase diameter itself. To address these challenges, we construct energy functions $\mathcal{E}_1$ and $\mathcal{E}_2$ (see \eqref{energy1} and \eqref{energy2}), employing convex combinations of phases, frequencies, accelerations, and jerks (i.e., derivatives of accelerations). These energy functions take the form of weighted-$\ell_1$ norms, represented as $\sum \lambda_i |z_i-z_j|$, where $z_i$ denotes the state variables (including phase, frequency, accelerations, etc.) and the weights $\lambda_i$ depend on the ordering of $z_i$. Through careful estimation, we prove that $\mathcal{E}_2$ governs the frequency diameter and decays exponentially to zero. In other words, by employing appropriately weighted energy estimates, we establish the hypo-coercivity of the frequency diameter and ultimately prove the emergence of synchronization.
 
 \begin{remark}\label{rem-ge}
 We provide some discussions on our approach and result.
\begin{enumerate} 
\item Weighted-$\ell_1$ Energy Method: The weighted-$\ell_1$ energy method employed in this study was initially introduced by the second author and collaborators in \cite{H-L-Z20} to investigate the emergence of flocking in the asymmetric Cucker-Smale model. Subsequently, it was applied to the asymmetric Kuramoto model \cite{Z-Z23}. The present work is the first application of this method to the second-order Kuramoto model, with modifications to the notations in \cite{Z-Z23} for improved readability. We anticipate that it may be applicable to a broader range of models on asymmetric networks. 
 
\item Extension to General Networks: While the current study focuses on the strongly connected case, this serves as a foundational step toward analyzing more general networks. In \cite{H-L-Z20}, the "node-decomposition" method was introduced to decompose a general digraph into multiple strongly connected components. By treating each strongly connected component as a single node, a hierarchical structure among these components emerges. Consequently, once synchronization in \eqref{KMI} is established for strongly connected networks, an inductive approach may facilitate the extension of this result to generally connected networks. This remains an issue for future research. 
 
\end{enumerate}

\end{remark}

The rest of the paper is organized as follows. In Section \ref{sec:2}, we introduce basic notions and the novel construction of four crucial functions as convex combinations, and present our main result. In Section \ref{sec:3}, we present the phase cohesiveness for the inertial Kuramoto model with frustration on a strongly connected digraph. In Section \ref{sec:4}, we give a proof of the main synchronization result. In Section \ref{sec:5}, we provide some numerical simulations. Section \ref{sec:6} concludes with a brief summary of the work.

\section{Preliminaries}\label{sec:2}
\setcounter{equation}{0}
In this section, we first introduce several concepts such as strongly connected digraph and synchronization. Moreover, we construct four important functions in the forum of convex combinations. Our main result on the complete frequency synchronization is also presented.

\subsection{Basic concepts}

In this part, we bring in some concepts on a digraph and synchronization.

\begin{definition}\cite{H-L-Z20} For a digraph $G = (V,E)$ associated to \eqref{KMI}, we have the following definitions.
\begin{enumerate}
\item A path in $G$ from $i_1$ to $i_k$ is a sequence $i_1,i_2,\ldots,i_k$ such that
\begin{equation*}
i_s \in \mathcal{N}_{i_{s+1}} \quad \text{for} \ 1 \le s \le k-1.
\end{equation*}
If there exists a path from $i$ to $j$, then vertex $j$ is said to be reachable from vertex $i$.

\item The digraph $G$ is said to be strongly connected if each vertex can be reachable from any other vertex.
\end{enumerate}
\end{definition}

For system \eqref{KMI}, we denote oscillators' phase, frequency, acceleration and jerk vectors as below
\begin{equation*}
\begin{aligned}
&\theta(t) = (\theta_1(t),\theta_2(t), \ldots, \theta_N(t)), \quad \omega(t) = (\omega_1(t), \omega_2(t), \ldots, \omega_N(t)),\\
&a(t) = (a_1(t), a_2(t), \ldots, a_N(t)), \quad b(t) = (b_1(t), b_2(t),\ldots, b_N(t)),
\end{aligned}
\end{equation*}
where 
\begin{equation}\label{definition_wab}
\omega_i(t) = \dot{\theta}_i(t) , \quad a_i(t) = \dot{\omega}_(t), \quad b_i(t) = \dot{a}_i(t).
\end{equation}
For convenience, we set the corresponding diameters as follows
\begin{equation*}
\begin{aligned}
&D_\theta(t) = \max_{1 \le i \le N} \theta_i(t) - \min_{1 \le i \le N} \theta_i(t), \quad D_\omega(t) = \max_{1 \le i \le N} \omega_i(t) - \min_{1 \le i \le N} \omega_i(t), \\
&D_a(t) = \max_{1 \le i \le N} a_i(t) - \min_{1 \le i \le N} a_i(t), \quad D_b(t) = \max_{1 \le i \le N} b_i(t) - \min_{1 \le i \le N} b_i(t),\\
&\bar{\Omega} = \max_{1 \le i \le N} \Omega_i, \quad \underline{\Omega} = \min_{1 \le i \le N} \Omega_i, \quad D_\Omega = \bar{\Omega} - \underline{\Omega}.
\end{aligned}
\end{equation*}
We next recall the concept of complete frequency synchronization.

\begin{definition}
Let $\theta(t) = (\theta_1(t), \cdots, \theta_N(t))$ be a Kuramoto phase vector associated to system \eqref{KMI}. 

\begin{enumerate}
\item The Kuramoto ensemble asymptotically exhibits complete frequency synchronization if and only if the relative frequency differences tend to zero asymptotically:
\begin{equation*}
\lim_{t \to +\infty} |\omega_i(t) - \omega_j(t)| = 0, \quad \forall \ i \ne j.
\end{equation*}

\item The dynamical state $\theta(t)$ is called an asymptotically phase-locked state if and only if the relative phase differences go to the constant asymptotically:
\begin{equation*}
\lim_{t \to +\infty} |\theta_i(t) - \theta_j(t)| = \theta_{ij}, \quad \forall \ i \ne j.
\end{equation*}

\end{enumerate}

\end{definition}

\subsection{Construction of $Q(t), P(t), A(t)$ and $B(t)$}
To facilitate later analysis, we introduce four functions of convex combination type, which are respectively equivalent to the diameters of phases, frequencies, accelerations, and jerks (the derivatives of accelerations). For this, let $z(t) = \{z_i(t)\}_{i=1}^N$ be the state quantity of an ensemble of $N$ oscillators associated to system \eqref{KMI} and suppose $z_i(t)$ is ordered as follows at time $t$:
\begin{equation*}
z_{s_1}(t) \le z_{s_2}(t) \le \cdots \le z_{s_N}(t),
\end{equation*}
where $s_1s_2\cdots s_N$ is a permutation of $\{1,2,\cdots,N\}$. Then, we follow the idea in  \cite{H-L-Z20} to construct the convex combination of $z_i$. More precisely, for any positive integer $c > 2$, we set 
\begin{equation}\label{coef-convex}
\left\{\begin{aligned}
&M_i=\frac{(c+N-2)!}{(c+N-1-i)!},\quad i=1,2,\cdots, N,\\
&\bar{z}(t)=\frac{\sum\limits_{i=1}^N M_i z_{s_i}}{\sum\limits_{i=1}^N M_i},\\
&\underline{z}(t)=\frac{\sum\limits_{i=1}^N M_i z_{s_{N+1-i}}}{\sum\limits_{i=1}^N M_i},\\
&Z(t)=\bar{z}(t)-\underline{z}(t).
\end{aligned}
\right.
\end{equation}
Note that 
\begin{equation}\label{M_N}
M_1 = 1, \quad M_N = \prod_{i=0}^{N-2} (c+i).
\end{equation}
It is obvious that $\bar{z}$ and $\underline{z}$ are both convex combinations of $z_i(t)$. As the weight coefficient $M_i$ is increasing with respect to $i$, we know that $\bar{z}(t)$ and $\underline{z}(t)$ are approximations to $\max z_i(t)$ and $\min z_i(t)$ respectively. Therefore, $Z(t)=\bar{z}(t)-\underline{z}(t)$ is an approximation to the diameter of $z_i(t)$, which is denoted by $D_z(t)$. In fact, we have the following comparison between $Z(t)$ and $D_z(t)$.

\begin{lemma}\label{Z_Dz_equiv}
Let $z(t) = (z_1(t),\ldots,z_N(t))$ be the state quantity of oscillators associated to system \eqref{KMI} at time $t$. Then, we have
\begin{equation}\label{eta}
\eta D_z(t) \le Z(t) \le D_z(t), \quad \eta = 1 - \frac{4}{c+2},
\end{equation}
where  $D_z(t) = \max\limits_{1 \le i \le N} z_i(t) - \min\limits_{1 \le i \le N} z_i(t)$.
\end{lemma}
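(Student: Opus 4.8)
The plan is to exploit the fact that, by construction \eqref{coef-convex}, both $\bar z$ and $\underline z$ are convex combinations of the ordered values $z_{s_1}\le z_{s_2}\le\cdots\le z_{s_N}$, so each lies in $[z_{s_1},z_{s_N}]$. Writing $S:=\sum_{i=1}^N M_i$, the inequality $\bar z\le z_{s_N}$ and $\underline z\ge z_{s_1}$ gives the upper bound for free:
\[
Z=\bar z-\underline z\le z_{s_N}-z_{s_1}=D_z .
\]
The real content is the lower bound, and the strategy is to control how far $\bar z$ falls below $z_{s_N}=\max_i z_i$ and how far $\underline z$ rises above $z_{s_1}=\min_i z_i$.

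For these deviation estimates I would use $\sum_i M_i=S$ to write
\[
z_{s_N}-\bar z=\frac1S\sum_{i=1}^{N}M_i\,(z_{s_N}-z_{s_i})=\frac1S\sum_{i=1}^{N-1}M_i\,(z_{s_N}-z_{s_i}),
\]
the vanishing $i=N$ term being dropped. Since $0\le z_{s_N}-z_{s_i}\le D_z$ for every $i$, this yields $z_{s_N}-\bar z\le \tfrac{S-M_N}{S}D_z$, and the symmetric computation (whose $i=N$ term also vanishes) gives $\underline z-z_{s_1}\le \tfrac{S-M_N}{S}D_z$. Subtracting both deviations from the identity $D_z=(z_{s_N}-\bar z)+(\bar z-\underline z)+(\underline z-z_{s_1})$ produces
\[
Z=\bar z-\underline z\ge\Big(1-\frac{2(S-M_N)}{S}\Big)D_z=\Big(\frac{2M_N}{S}-1\Big)D_z .
\]

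It then remains only to bound the single ratio $M_N/S$ from below, and I expect this arithmetic estimate to be the main obstacle. From the definition one checks $M_{i+1}/M_i=c+N-1-i$, so that for $0\le k\le N-1$
\[
\frac{M_{N-k}}{M_N}=\prod_{j=0}^{k-1}\frac{1}{c+j}\le \frac{1}{c^{\,k}} .
\]
Summing over $k$ and comparing against a geometric series gives the key bound
\[
\frac{S}{M_N}=\sum_{k=0}^{N-1}\frac{M_{N-k}}{M_N}\le\sum_{k=0}^{\infty}\frac{1}{c^{\,k}}=\frac{c}{c-1}\le 1+\frac2c ,
\]
where the last step uses $c>2$. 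Hence $M_N/S\ge c/(c+2)$, and inserting this into the lower bound above yields
\[
Z\ge\Big(\frac{2c}{c+2}-1\Big)D_z=\Big(1-\frac{4}{c+2}\Big)D_z=\eta D_z ,
\]
which is exactly \eqref{eta}. Everything outside the series estimate for $S/M_N$ is routine bookkeeping with convex combinations; the only point requiring care is making the geometric comparison clean enough to land on the stated constant $\eta=1-\tfrac{4}{c+2}$.
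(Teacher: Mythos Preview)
Your proof is correct and follows essentially the same route as the paper: both reduce the lower bound to showing $S/M_N\le 1+\tfrac{2}{c}$ (equivalently $M_N/S\ge \tfrac{c}{c+2}$) and then conclude $Z\ge(\tfrac{2c}{c+2}-1)D_z=\eta D_z$. The only cosmetic difference is in how that ratio is bounded: the paper uses the telescoping estimate $\tfrac{M_i}{M_N}\le\tfrac{1}{c+N-2-i}-\tfrac{1}{c+N-1-i}$ for $i\le N-2$, while you compare against a geometric series in $1/c$; both yield the same constant.
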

\begin{proof}
As $\bar{z}$ and $\underline{z}$ are both convex combinations of $z_i(t)$, it is obvious that $Z(t)\leq D_z(t)$. We now prove the lower bound estimate of $Z(t)$. Without loss of generality, we may assume 
\[z_{1}(t) \le z_{2}(t) \le \cdots \le z_{N}(t).\]
Then, according to the formula of $\bar{z}$, we have 
\begin{align}
\bar{z}-z_1&=\frac{\sum\limits_{i=1}^N M_i z_{i}}{\sum\limits_{i=1}^N M_i}-z_1\geq \frac{M_N (z_{N}-z_1)}{\sum\limits_{i=1}^N M_i}= \frac{ (z_{N}-z_1)}{\sum\limits_{i=1}^{N-2} \frac{M_i}{M_N}+\frac{1}{c}+1}.\label{z-upp}
\end{align}
Then, for $1 \le i\leq N-2$, direct calculation shows that 
\[ \frac{M_i}{M_N}= \frac{(c-1)!}{(c+N-1-i)!} \leq  \frac{1}{(c+N-1-i)(c+N-2-i)}=\frac{1}{c+N-2-i}-\frac{1}{c+N-1-i}.\]
 Substitute above estimates into \eqref{z-upp} to have 
\[\bar{z}-z_1\geq \frac{ (z_{N}-z_1)}{\frac{2}{c}+1}=\frac{c}{c+2}D_z.\]
Similar criteria implies that 
\[z_N-\underline{z}\geq \frac{c}{c+2}D_z.\]
Finally, we combine above estimates together to obtain that 
\[Z=\bar{z}-\underline{z}\geq \frac{2c}{c+2}D_z-D_z=(1-\frac{4}{c+2})D_z.\]
\end{proof}

Subsequently, we apply the construction principle in \eqref{coef-convex} and Lemma \ref{Z_Dz_equiv} to construct four crucial functions $Q(t), P(t), A(t)$ and $B(t)$, which can control the diameters of phases, frequencies, accelerations and jerks, respectively.\\

\noindent $\bullet$ {\bf Construction of $Q(t)$:} For any given time $t \ge 0$, we arrange the oscillators' phases $\{\theta_i(t)\}_{i=1}^N$ from minimum to maximum, i.e.,
\begin{equation*}\label{theta_random_order}
\theta_{s_1}(t) \le \theta_{s_2}(t) \le \cdots \le \theta_{s_N}(t),
\end{equation*}
where $s_1s_2\cdots s_N$ is a permutation of $\{1,2,\cdots,N\}$ depending on the phase values at time $t$. Then we define
\begin{equation}\label{Q_function}
Q(t) = \bar{\theta}(t) - \underline{\theta}(t).
\end{equation}
Moreover, we apply Lemma \ref{Z_Dz_equiv} to have
\begin{equation}\label{Q_Dtheta_equiv}
\eta D_\theta(t) \le Q(t) \le D_\theta(t).
\end{equation}

\noindent $\bullet$ {\bf Construction of $P(t)$:} For any given time $t \ge 0$, the frequencies $\{\omega_i(t)\}_{i=1}^N$ of oscillators are arranged in ascending order, i.e.,
\begin{equation*}\label{w_random_order}
\omega_{l_1} (t) \le \omega_{l_2}(t) \le \cdots\le  \omega_{l_N}(t),
\end{equation*}
where $l_1l_2\cdots l_N$ is a permutation of $\{1,2,\cdots,N\}$ relying on the frequency values at time $t$. Then we define
\begin{equation}\label{P_function}
P(t) = \bar{\omega}(t)- \underline{\omega}(t). 
\end{equation}
Moreover, we have
\begin{equation}\label{P_Dw_equiv}
 \eta D_\omega(t) \le P(t) \le D_\omega(t).
\end{equation}

\noindent $\bullet$ {\bf Construction of $A(t)$:} For any given time $t \ge 0$, the oscillators' accelerations $\{a_i(t)\}_{i=1}^N$ are arranged from minimum to maximum, i.e.,
\begin{equation*}\label{a_random_order}
a_{p_1} (t) \le a_{p_2}(t) \le \cdots\le  a_{p_N}(t),
\end{equation*}
where $p_1p_2\cdots p_N$ is a permutation of $\{1,2,\cdots,N\}$ depending on the acceleration values at time $t$. Then we define
\begin{equation}\label{A_function}
A(t) = \bar{a}(t)- \underline{a}(t).
\end{equation}
Moreover, we have
\begin{equation}\label{A_Da_equiv}
\eta D_a(t) \le A(t) \le D_a(t).
\end{equation}

\noindent $\bullet$ {\bf Construction of $B(t)$:} For any given time $t \ge 0$, we sort the jerks $\{b_i(t)\}_{i=1}^N$ of oscillators in ascending order, i.e.,
\begin{equation*}\label{b_random_order3}
b_{q_1} (t) \le b_{q_2}(t) \le \cdots\le  b_{q_N}(t),
\end{equation*}
where $q_1q_2\cdots q_N$ is a permutation of $\{1,2,\cdots,N\}$ relying on the jerk values at time $t$. Then we define
\begin{equation}\label{tildeB_function}
B(t) = \bar{b}(t)- \underline{b}(t).
\end{equation}
Moreover, we have
\begin{equation}\label{tildeB_Db_equiv}
 \eta D_b(t) \le  B(t) \le D_b(t).
\end{equation}

\subsection{Main result} 
In this part, we present our main result on the emergence of frequency synchronization for the inertial Kuramoto model \eqref{KMI} with frustration on a strongly connected digraph.

\begin{theorem}\label{main}
Let $\theta(t)$ be a solution to system \eqref{KMI} on a strongly connected digraph, and suppose the initial phases satisfy $D_\theta(0) < \pi$. Then for sufficiently large coupling strength $\kappa$ and sufficiently small inertia $m$ and frustration $\alpha$, there exists a finite time $t_* \ge 0$ such that
\begin{equation*}
D_\omega(t) \le Ce^{-\tilde{\Lambda}(t - t_*)}, \quad t \ge t_*,
\end{equation*}
where $C, \tilde{\Lambda}$ are positive constants depending on the system parameters.
\end{theorem}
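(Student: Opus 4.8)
The plan is to prove the theorem in two stages that mirror the two energy functions $\mathcal{E}_1$ and $\mathcal{E}_2$. In the first stage (Section \ref{sec:3}) I would establish \emph{phase cohesiveness}: starting from $D_\theta(0) < \pi$, show that for large $\kappa$ and small $m,\alpha$ there is a finite time $t_*$ after which $D_\theta(t) \le D^\infty$ for a threshold $D^\infty$ satisfying $D^\infty + \alpha < \tfrac{\pi}{2}$. The purpose of forcing $D_\theta$ below this level is that every phase difference $\theta_j - \theta_i + \alpha$ occurring in \eqref{KMI} then stays strictly inside $(-\tfrac{\pi}{2},\tfrac{\pi}{2})$, so that the cosine factors produced by differentiation are uniformly positive. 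I would obtain this by a continuity/bootstrap argument: as long as $D_\theta < \pi$ the coupling acting on the extremal-phase oscillators is attractive for small $\alpha$, and one shows that $\mathcal{E}_1$ is dissipative and controls $D_\theta$ from above through the equivalence $Q \sim D_\theta$ of Lemma \ref{Z_Dz_equiv}, which prevents $D_\theta$ from ever returning to $\pi$ despite the inertial term carrying phases apart.

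The engine of the second stage is the differentiated system. Writing $a_i = \dot\omega_i$ and $b_i = \dot a_i$ and differentiating \eqref{KMI} once gives
\begin{equation*}
m b_i + a_i = \kappa \sum_{j \in \mathcal{N}_i} \cos(\theta_j - \theta_i + \alpha)\,(\omega_j - \omega_i) =: \kappa\,\mathcal{L}_i[\omega],
\end{equation*}
so that $\dot\omega_i = \kappa\,\mathcal{L}_i[\omega] - m b_i$. Once $D_\theta$ lies below the cohesiveness threshold the weights $\cos(\theta_j - \theta_i + \alpha)$ are uniformly positive, hence $\mathcal{L}[\,\cdot\,]$ is a directed, strongly connected, positively weighted graph Laplacian acting on the frequency vector. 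The key observation is that the frequency evolution is \emph{consensus dynamics plus a small perturbation}: the consensus part $\kappa\mathcal{L}[\omega]$ contracts the frequency diameter, while $-m b_i$ is a perturbation controlled by the jerk. A further differentiation gives a damped equation for $b_i$ whose right-hand side is $\kappa$ times a Laplacian of the accelerations plus a remainder quadratic in the frequency differences.

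These identities yield, after ordering the oscillators by $\omega$, $a$, $b$ respectively and differentiating the convex combinations of Lemma \ref{Z_Dz_equiv}, a closed system of differential inequalities for the Dini derivatives (valid for $t \ge t_*$):
\begin{align*}
\dot P &\le -\kappa\Lambda_0 P + \tfrac{m}{\eta} B, \qquad m\dot A + A \le \kappa C P, \\
m\dot B + B &\le \kappa C_1 A + \kappa C_2 P^2,
\end{align*}
where the decisive gain $-\kappa\Lambda_0 P$ is the contraction of the consensus part and $\Lambda_0 > 0$ depends only on the graph and the lower bound of the cosine weights. I would then take $\mathcal{E}_2$ of the form $P + \beta m A + \gamma m^2 B$ with weights tuned so that the destabilizing cross terms ($+\kappa C P$ feeding $A$, $+\kappa C_1 A$ feeding $B$, and $+\tfrac{m}{\eta}B$ feeding $P$) are absorbed by the damping $-A$, $-B$ and by $-\kappa\Lambda_0 P$; choosing $\gamma > 1/\eta$ and $\beta \in (\gamma m\kappa C_1,\ \Lambda_0/C)$, which is possible precisely when $m\kappa$ is small, and using $P \le P^\infty$ to linearize the $P^2$ term, one arrives at
\begin{equation*}
\frac{d}{dt}\mathcal{E}_2(t) \le -\Lambda\,\mathcal{E}_2(t), \qquad t \ge t_*.
\end{equation*}
Gronwall's inequality gives exponential decay of $\mathcal{E}_2$, and since $\mathcal{E}_2 \ge P \ge \eta D_\omega$ by construction and Lemma \ref{Z_Dz_equiv}, the bound $D_\omega(t) \le C e^{-\tilde\Lambda(t-t_*)}$ follows.

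I expect the main obstacle to be the contraction estimate $\dot P \le -\kappa\Lambda_0 P$ for the consensus part under mere strong connectivity rather than symmetry. Because the graph is asymmetric, the frequency diameter is not uniformly dissipated ($\dot P$ may vanish at isolated times), so Gronwall cannot be applied to $P$ directly; the dissipation must be extracted from the ordering-dependent weights $M_i$, where each directed edge contributes a definite-sign term and strong connectivity guarantees that the aggregate lower-bounds a multiple of the diameter — this is the hypo-coercivity, and its proof is the adaptation of the weighted-$\ell_1$ method of Lemma \ref{Z_Dz_equiv} to the dynamics. Two subsidiary difficulties must be handled along the way: first, $P$, $A$, $B$ are only piecewise smooth because the orderings of $\omega_i$, $a_i$, $b_i$ change in time, so every derivative above must be read as an upper Dini derivative, using that these convex combinations are continuous and that the Dini derivative of such an ordering-dependent combination obeys the expected one-sided inequalities; second, the singular $1/m$ factors in the $a$- and $b$-equations must be tamed by the $m$-dependent scaling of the weights in $\mathcal{E}_2$, so that $\Lambda$ remains bounded below uniformly as $m \to 0$.
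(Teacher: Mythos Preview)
Your proposal is correct and follows essentially the same two-stage architecture as the paper: phase cohesiveness via $\mathcal{E}_1$ and a bootstrap to force $D_\theta < D^\infty$ with $D^\infty+\alpha<\tfrac{\pi}{2}$, then exponential decay of $\mathcal{E}_2 = P + \beta m A + \gamma m^2 B$ using the weighted-$\ell^1$ combinations and the strong-connectivity induction on the $M_i$ weights to extract the consensus contraction. The only cosmetic difference is that you write the $P$-dynamics as the first-order inequality $\dot P \le -\kappa\Lambda_0 P + \tfrac{m}{\eta}B$ (via $\dot\omega_i=\kappa\mathcal{L}_i[\omega]-mb_i$), whereas the paper keeps the second-order form $m\ddot P+\dot P\le -cP$ and absorbs $m\ddot P$ inside the energy combination using the rearrangement fact $B\ge|\ddot P|$; these are algebraically equivalent and lead to the same energy function, the same smallness condition on $m\kappa$, and the same decay rate.
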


\begin{remark}
Next, we elaborate on our sufficient framework leading to the complete frequency synchronization. In Theorem \ref{main}, the initial phase diameter is assumed to be less than $\pi$, thus a positive constant $\gamma$ can be found such that
\begin{equation*}\label{initial_con}
D_\theta(0) < \gamma < \pi.
\end{equation*}
The integer $c$ in the convex coefficient setting \eqref{coef-convex} is required to be large such that
\begin{equation}\label{c_set}
c > \max \left\{\frac{N \gamma}{\sin\gamma},\  \frac{N}{\cos(D^\infty+\alpha)} \right\}, \quad D_\theta(0) <  \left(1 - \frac{4}{c+2} \right) (1 -\varepsilon) \gamma,
\end{equation}
where $D^\infty \in \left(0, \frac{\pi}{2} \right)$ is a constant, $D^\infty + \alpha < \frac{\pi}{2}$ is assumed, and $\varepsilon >0$ is a suitable small constant. Recall from \eqref{M_N} and \eqref{eta} that
\begin{equation*}
M_N = \prod_{i=0}^{N-2} (c+i), \quad \eta = 1 - \frac{4}{c+2},
\end{equation*}
which will be determined once $c$ is determined. Then to guarantee the emergence of synchrony, the coupling strength $\kappa$ needs to be enough large, and inertia $m$ and frustration $\alpha$ are required to enough small such that the following constraints are satisfied:
\begin{align}
&m\kappa < \frac{\eta^3}{8N^2 M_N} \min \left\{\frac{\sin \gamma}{ \gamma}, \ \cos (D^\infty + \alpha)\right\}, \label{mk_con1}\\
&D_\theta(0) +  \frac{\eta^2 \sin \gamma}{2N \gamma M_N} mD_\omega(0) +  2m^2 D_a(0) < \eta (1-\varepsilon)\gamma < \pi,\label{mk_con2}\\
&\frac{2(D_\Omega + 2N\kappa \sin \alpha)}{\eta\Lambda} < \min \left\{(1-\varepsilon)\gamma, \ \frac{D^\infty}{2}\ \right\},\label{mk_con3}\\
&4NmM_N\left(D_\omega(0) + D_\Omega\right) + 8 N^2 m\kappa M_N < \eta^2 \cos(D^\infty + \alpha), \label{mk_con4}
\end{align}
where 
\begin{align}
&a_i(0) = \frac{1}{m} \left( - \omega_i(0) + \Omega_i + \kappa \sum_{j \in \mathcal{N}_i} \sin (\theta_j(0) - \theta_i(0) + \alpha ) \right), \ D_a(0) = \max_{1 \le i \le N} a_i(0) - \min_{1 \le i \le N} a_i(0),\label{a}\\
&\Lambda= \min\left(\frac{\eta \kappa \cos \alpha \sin \gamma}{\gamma M_N},\ \frac{1}{m} - \frac{8 N^2 \kappa \gamma M_N}{\eta^3 \sin \gamma},\  \frac{1}{2m}\right). \label{Lambda} 
\end{align}
\end{remark}

\section{Phase cohesinveness}\label{sec:3}
\setcounter{equation}{0}
In this section, we study the phase dynamics of system \eqref{KMI} on a strongly connected digraph. More precisely, we will show that all oscillators will concentrate into an arc confined in a quarter circle. Due to the loss of uniform damping , we intend to construct an energy function which can control the phase diameter, and derive its first-order Gronwall-type inequality, which finally yields the uniformly small boundedness of phase diameter.

\subsection{Energy function and its dynamics}
Based on the analyticity of solution to system \eqref{KMI}, we can divide the time line into a union of countably many intervals:
\begin{equation}\label{divide_time}
[0,+\infty) = \bigcup_{l=1}^{+\infty} I_l, \quad I_l = [t_{l-1},t_l), \quad t_0 = 0,
\end{equation}
such that in each $I_l$, the orders of oscillators' phases, frequencies and accelerations are unchanged. Under the aforementioned settings in Section \ref{sec:2}, we provide a second-order differential inequality of function $Q(t)$ introduced in \eqref{Q_function}. 


\begin{lemma}\label{second_Q_eq}
Let $\theta(t)$ be a solution to system \eqref{KMI} on a strongly connected digraph, and suppose
\begin{equation*}
D_\theta(t) < \gamma < \pi, \quad \text{for} \  t \in I_l,
\end{equation*}
where $I_l$ is some time interval defined in \eqref{divide_time}. Then for enough large $c$ in \eqref{coef-convex}, we have
\begin{equation}\label{second_Q_differential}
m\ddot{Q}(t) + \dot{Q}(t) \le D_\Omega + 2N\kappa \sin \alpha  -   \frac{2\eta \kappa \cos \alpha \sin \gamma}{ \gamma M_N} Q(t), \quad t \in I_l.
\end{equation}
\end{lemma}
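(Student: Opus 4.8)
My plan is to reduce the second-order inequality \eqref{second_Q_differential} to a first-order-type coupling estimate by exploiting that the orderings are frozen on $I_l$. Fix the interval $I_l$: there the permutation realizing $\theta_{s_1}(t)\le\cdots\le\theta_{s_N}(t)$ is constant, so the convex combinations $\bar{\theta}$ and $\underline{\theta}$ from \eqref{coef-convex} are \emph{fixed} linear combinations of the analytic functions $\theta_i(t)$ and are themselves analytic on $I_l$. Writing $S=\sum_{i=1}^{N}M_i$ and applying the operator $m\frac{d^2}{dt^2}+\frac{d}{dt}$ termwise, I would use the observation that \eqref{KMI} reads $m\ddot{\theta}_i+\dot{\theta}_i=\Omega_i+\kappa\sum_{j\in\mathcal{N}_i}\sin(\theta_j-\theta_i+\alpha)$, so that this operator applied to each $\theta_{s_i}$ reproduces exactly the right-hand side of \eqref{KMI}. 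Since $m\ddot{Q}+\dot{Q}=(m\ddot{\bar{\theta}}+\dot{\bar{\theta}})-(m\ddot{\underline{\theta}}+\dot{\underline{\theta}})$, this yields
\begin{equation*}
m\ddot{Q}+\dot{Q}=\frac{1}{S}\sum_{i=1}^{N}M_i\big(\Omega_{s_i}-\Omega_{s_{N+1-i}}\big)+\frac{\kappa}{S}\sum_{i=1}^{N}M_i\left(\sum_{j\in\mathcal{N}_{s_i}}\sin(\theta_j-\theta_{s_i}+\alpha)-\sum_{j\in\mathcal{N}_{s_{N+1-i}}}\sin(\theta_j-\theta_{s_{N+1-i}}+\alpha)\right).
\end{equation*}
This identity is the structural crux of the reduction: it makes the inertial estimate formally identical to the overdamped one, so the remaining work only concerns frozen-order convex combinations of the forcing and the coupling.

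I would then bound the two contributions separately. The forcing term is a difference of two convex combinations of $\{\Omega_i\}$ (the weights sum to one via $\frac{1}{S}\sum_i M_i=1$), hence is at most $D_\Omega$. For the interaction, expand $\sin(\theta_j-\theta_{s_i}+\alpha)=\cos\alpha\,\sin(\theta_j-\theta_{s_i})+\sin\alpha\,\cos(\theta_j-\theta_{s_i})$. The frustration part carries the factor $\sin\alpha$; using $|\cos|\le 1$, $|\mathcal{N}_i|\le N-1$, and again $\frac{1}{S}\sum_i M_i=1$, each bracketed difference of neighbor sums is at most $2(N-1)$, so this part contributes at most $2N\kappa\sin\alpha$. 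This bound is deliberately crude, since the frustration part need not be dissipative.

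The heart of the argument, and the step I expect to be the main obstacle, is to show that the $\cos\alpha$-part is dissipative, namely
\begin{equation*}
\frac{\kappa\cos\alpha}{S}\sum_{i=1}^{N}\big(M_i-M_{N+1-i}\big)\sum_{j\in\mathcal{N}_{s_i}}\sin(\theta_j-\theta_{s_i})\le-\frac{2\eta\kappa\cos\alpha\sin\gamma}{\gamma M_N}Q(t),
\end{equation*}
where the weights $W_i:=M_i-M_{N+1-i}$ are strictly increasing and antisymmetric about the center. Following the weighted-$\ell^1$ methodology of \cite{H-L-Z20,Z-Z23}, I would combine three ingredients. First, since $D_\theta(t)<\gamma<\pi$ all phase gaps lie in $(-\gamma,\gamma)$, and the concavity bound $\sin x\ge\frac{\sin\gamma}{\gamma}x$ on $[0,\gamma]$ converts each sine into a linear phase gap up to the factor $\frac{\sin\gamma}{\gamma}$. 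Second, rewriting the double sum as a sum over directed edges and performing an Abel summation over the rank thresholds turns the expression into a weighted accumulation of the fluxes crossing each threshold; because high-rank oscillators carry the dominant positive weight and low-rank ones the dominant negative weight, the extremal contributions are negative. Third, and this is where the asymmetry genuinely bites, strong connectivity guarantees that across every rank threshold there is a directed edge pointing into the upper block, whose sine is negative, so each threshold supplies a strictly negative flux; telescoping then recovers $\bar{\theta}-\underline{\theta}=Q$. The constants assemble as stated: $\frac{\sin\gamma}{\gamma}$ from concavity, $\eta$ and the factor $2$ from the equivalence in Lemma \ref{Z_Dz_equiv} and from combining the $\bar{\theta}$ and $\underline{\theta}$ contributions, and the factor $\frac{1}{M_N}$ from normalizing by $S$, which is comparable to $M_N$, when the guaranteed cross-threshold flux can only be extracted through a least-favorably-weighted threshold.

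The principal difficulty is precisely this last dissipation estimate. Unlike the symmetric case, edges cannot be paired to symmetrize the sine sum, so I cannot argue term-by-term negativity and must instead rely on the monotone weights together with the cross-threshold flux supplied by strong connectivity. Care is also needed to verify that the positive terms, coming from edges whose source has higher phase than their target, are dominated after weighting by the negative cross-threshold fluxes; it is exactly this competition, combined with the near-concentration of the weights $M_i$ near $M_N$, that forces the dissipation rate to be as small as $O(1/M_N)$, consistent with the parameter constraints \eqref{mk_con1}--\eqref{mk_con4}. Once the three bounds above are combined, \eqref{second_Q_differential} follows on each $I_l$.
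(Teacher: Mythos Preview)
Your plan is essentially the paper's: on $I_l$ the weights are frozen, so $m\ddot Q+\dot Q$ splits into the convex-combined forcing (bounded by $D_\Omega$), the $\sin\alpha$ frustration part (bounded by $2N\kappa\sin\alpha$), and the $\cos\alpha$ coupling, for which the weighted-$\ell^1$ machinery of \cite{H-L-Z20,Z-Z23} supplies the dissipation. The only organizational difference is that the paper does not combine $\bar\theta$ and $\underline\theta$ through your antisymmetric weights $W_i=M_i-M_{N+1-i}$; it estimates $m\ddot{\bar\theta}+\dot{\bar\theta}$ and $m\ddot{\underline\theta}+\dot{\underline\theta}$ separately, obtains $-\frac{2\eta\kappa\cos\alpha\sin\gamma}{\gamma M_N}D_\theta$ after subtraction, and only then passes to $Q$ via $D_\theta\ge Q$.

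Where the paper is more concrete than your sketch is exactly at the step you flag as the main obstacle. Instead of an Abel summation over thresholds, it runs a top-down induction on the partial sums $\sum_{i=p+1}^{N}\frac{M_i}{M_N}(m\ddot\theta_i+\dot\theta_i)$. At the inductive step the positive ``edges from above'' at node $j$, bounded by $(N-j)(\theta_N-\theta_j)$, are absorbed by the coefficient gap $M_{j+1}-M_j=(c+N-2-j)M_j$ applied to the accumulated negative flux $\sum_{i>j}\min_{k\in\mathcal N_i,\,k\le i}\sin(\theta_k-\theta_i)\le\frac{\sin\gamma}{\gamma}(\theta_j-\theta_N)$; the latter telescoping is precisely the strong-connectivity path estimate from \cite[Lemma~4.1]{Z-Z23}. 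The absorption succeeds because $c>\frac{N\gamma}{\sin\gamma}$ (cf.\ \eqref{c_set}), which is the quantitative content of your ``monotone weights dominate positive edges'' heuristic. Your threshold-flux picture is compatible with this, but note that what the argument actually yields is dissipation against $D_\theta$, not against $Q$ directly.
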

\begin{proof}
For $t \in I_l$, we see that there exists some permutation $s_1s_2\cdots s_N$ of $\{1,2,\cdots,N\}$ such that oscillators' phases are sorted in ascending order as below
\begin{equation}\label{theta_random_order2}
\theta_{s_1}(t) \le \theta_{s_2}(t) \le \cdots \le \theta_{s_N}(t).
\end{equation}
For convenience and without loss of generality, we assume $s_i = i$ in \eqref{theta_random_order2}, i.e.,
\begin{equation*}\label{theta_regular_order2}
\theta_1(t) \le \theta_2(t) \le \cdots \le \theta_N(t),
\end{equation*}
if necessary, we still take \eqref{theta_random_order2}. Then, according to the construction of $Q(t)$ in \eqref{Q_function}, we have
\begin{equation}\label{D-1}
m\ddot{Q}(t) + \dot{Q}(t) = m (\ddot{\bar{\theta}}(t) - \ddot{\underline{\theta}}(t)) + (\dot{\bar{\theta}}(t) - \dot{\underline{\theta}}(t)) = m\ddot{\bar{\theta}}(t) + \dot{\bar{\theta}}(t) - (m\ddot{\underline{\theta}}(t) + \dot{\underline{\theta}}(t)).
\end{equation}
Next, we will gain the estimates on the dynamics of $\bar{\theta}$ and $\underline{\theta}$ respectively, and the detailed proof is split into the following two steps.\\

\noindent $\bullet$ {\bf Step 1: (Dynamical estimates of $\bar{\theta}$)} From the definition of $\bar{\theta}$, we have 
\begin{equation}\label{D-1-2}
\bar{\theta}(t)=\frac{\sum\limits_{i=1}^N M_i \theta_{i}}{\sum\limits_{i=1}^N M_i}=\frac{\sum\limits_{i=1}^{N-1} \frac{M_i}{M_N} \theta_{i}+\theta_N}{\sum\limits_{i=1}^N \frac{M_i}{M_N}},\quad \frac{M_i}{M_N}=\frac{1}{\prod\limits_{j=i}^{N-1} (c+N-1-j)},\quad 1\leq i\leq N-1.
\end{equation}
This is a linear combination of $\theta_i$ (a convex combination) and therefore we may obtain the estimate regarding the dynamics of $\bar{\theta}$ by induction. Firstly, we have for $ t \in I_l$
\begin{equation}\label{D-2-1}
\begin{aligned}
m \ddot{\theta}_i(t) + \dot{\theta}_i(t) &= \Omega_i  + \kappa \sum_{j \in \mathcal{N}_i} \sin (\theta_j(t) - \theta_i(t) + \alpha)\\
&\le \bar{\Omega}  + \kappa \sum_{j \in \mathcal{N}_i} [\sin (\theta_j(t) - \theta_i(t)) \cos \alpha + \cos (\theta_j(t) - \theta_i(t)) \sin \alpha]\\
&\le \bar{\Omega} + N\kappa \sin \alpha + \kappa \cos \alpha \min_{\substack{j \in \mathcal{N}_i \\ j\le i}} \sin (\theta_j(t) - \theta_i(t)) + \kappa \sum_{\substack{j \in \mathcal{N}_i\\ j>i}} \sin (\theta_j(t) - \theta_i(t)) \cos \alpha.
\end{aligned}
\end{equation}
Now, we need to control the last positive part in above estimates. We will use method of induction to achieve this. In fact, we apply \eqref{D-2-1} to obtain that 
\begin{equation}\label{D-2}
\begin{aligned}
m \ddot{\theta}_N(t) + \dot{\theta}_N(t) 
&\le \bar{\Omega} + N\kappa \sin \alpha + \kappa \cos \alpha \min_{j \in \mathcal{N}_N} \sin (\theta_j(t) - \theta_N(t)), \quad t \in I_l.
\end{aligned}
\end{equation}
Recall the expression \eqref{D-1-2} and we claim that 
\begin{align}
&\sum_{i=p+1}^{N} \frac{M_i}{M_N} (m\ddot{\theta}_{i}(t)+\dot{\theta}_i(t))\label{D-2-3}\\
&\leq \sum_{i=p+1}^{N} \frac{M_i}{M_N} (\bar{\Omega} + N\kappa \sin \alpha)+  \kappa \cos \alpha \frac{M_{p+1}}{M_N}\sum_{i = p+1}^N  \min_{\substack{k \in \mathcal{N}_i \\ k \le i} } \sin (\theta_k(t) - \theta_i(t)).\notag
\end{align}
According to \eqref{D-2}, the above claim holds for the first step $p=N-1$. Next, we suppose \eqref{D-2-3} holds for $p=j$, and then we verify \eqref{D-2-3} holds for the next step $p= j-1$. For this, we combine \eqref{D-2-1} and \eqref{D-2-3} to have  
\begin{align}
&\sum_{i=j}^{N} \frac{M_i}{M_N} (m\ddot{\theta}_{i}(t)+\dot{\theta}_i(t))\label{D-2-4}\\
&\leq \sum_{i=j}^{N} \frac{M_i}{M_N} (\bar{\Omega} + N\kappa \sin \alpha)+  \kappa \cos \alpha \frac{M_{j}}{M_N}\sum_{i = j}^N  \min_{\substack{k \in \mathcal{N}_i \\ k \le i} } \sin (\theta_k(t) - \theta_i(t))\notag\\
&\quad +\kappa \cos \alpha \frac{M_{j+1}-M_j}{M_N}\sum_{i = j+1}^N  \min_{\substack{k \in \mathcal{N}_i \\ k \le i} } \sin (\theta_k(t) - \theta_i(t)) + \kappa\cos \alpha\frac{M_{j}}{M_N}  \sum_{\substack{k \in \mathcal{N}_j\\ k>j}} \sin (\theta_k(t) - \theta_j(t)).\notag 
\end{align}
We only need to prove the third line of \eqref{D-2-4} is non-positive. It follows fom \eqref{coef-convex} that
\[M_{j+1}=(c+N-1-j)M_j.\]
Then, we recall that $D_\theta(t)<\gamma<\pi$ to have 
\begin{equation}\label{D-2-5}
\begin{aligned}
&(M_{j+1}-M_j)\sum_{i = j+1}^N  \min_{\substack{k \in \mathcal{N}_i \\ k \le i} } \sin (\theta_k(t) - \theta_i(t))\\
&\leq (c-2+N-j)M_j \sum_{i = j+1}^N \min_{\substack{k \in \mathcal{N}_i \\ k \le i} } \frac{\sin \gamma}{\gamma} (\theta_k(t) - \theta_i(t))\\
&\leq (c-2+N-j)M_j \frac{\sin \gamma}{\gamma} (\theta_j-\theta_N),
\end{aligned}
\end{equation}
where the last inequality is due to the strong connectivity of the network and the proof is similar to that in Lemma 4.1 in \cite{Z-Z23}, thus we omit the details. We substitute \eqref{D-2-5} into the third line of \eqref{D-2-4} to have 
\begin{equation}\label{D-2-6}
\begin{aligned}
&\kappa \cos \alpha \frac{M_{j+1}-M_j}{M_N}\sum_{i = j+1}^N  \min_{\substack{k \in \mathcal{N}_i \\ k \le i} } \sin (\theta_k(t) - \theta_i(t)) + \kappa\cos \alpha\frac{M_{j}}{M_N}  \sum_{\substack{k \in \mathcal{N}_j\\ k>j}} \sin (\theta_k(t) - \theta_j(t))\\
&\leq \kappa \cos \alpha \frac{M_j}{M_N}\left((c-2+N-j) \frac{\sin \gamma}{\gamma} (\theta_j(t)-\theta_N(t)) +  \sum_{\substack{k \in \mathcal{N}_j\\ k>j}} \sin (\theta_k(t) - \theta_j(t))\right)\\
&\leq \kappa \cos \alpha \frac{M_j}{M_N}\left((c-2+N-j) \frac{\sin \gamma}{\gamma} (\theta_j(t)-\theta_N(t)) +  (N-j)(\theta_N(t) - \theta_j(t))\right)
\end{aligned}
\end{equation}
As we choose $c>\frac{N\gamma}{\sin \gamma}$ and $1\leq j\leq N-1$, we have
\begin{equation}\label{D-2-7}
\begin{aligned}
&\left((c-2+N-j) \frac{\sin \gamma}{\gamma}  -  (N-j)\right)\\
&= c  \frac{\sin \gamma}{\gamma} -(N-j) -2 \frac{\sin \gamma}{\gamma}+(N-j)\frac{\sin \gamma}{\gamma}\\
&\geq j- \frac{\sin \gamma}{\gamma}\geq 0,
\end{aligned}
\end{equation}
which implies that \eqref{D-2-6} is non-positive. Therefore, we combine \eqref{D-2-4},  \eqref{D-2-6} and \eqref{D-2-7} to complete the verification. More precisely, we conclude \eqref{D-2-3} holds for $j=0,1,\cdots N-1$. Especially when $j=0$, we recall the form of $\bar{\theta}$ in \eqref{D-1-2} and apply the strong connectivity of the network again (similar estimates in \eqref{D-2-5}) to  have
\begin{equation}\label{D-2-8}
\begin{aligned}
m\ddot{\bar{\theta}}(t)+\dot{\bar{\theta}}(t)) &\leq \bar{\Omega} + N\kappa \sin \alpha+  \kappa \cos \alpha \frac{M_{1}}{M_N\sum\limits_{i=1}^N \frac{M_i}{M_N}}\sum_{i = 1}^N  \min_{\substack{k \in \mathcal{N}_i \\ k \le i} } \sin (\theta_k(t) - \theta_i(t))\\
&\leq \bar{\Omega} + N\kappa \sin \alpha +  \kappa \cos \alpha \frac{1}{M_N(1+\frac{2}{c})} \frac{\sin \gamma}{\gamma}(\theta_1(t) - \theta_N(t))\\
&\leq \bar{\Omega} + N\kappa \sin \alpha -  \frac{\eta \kappa \cos \alpha \sin \gamma}{\gamma M_N} D_\theta(t),
\end{aligned}
\end{equation}
where $\eta$ is defined in Lemma \ref{Z_Dz_equiv}.

\noindent $\bullet$ {\bf Step 2: (Dynamical estimates of $\underline{\theta}$ and $Q$)} With similar criteria as in the first step, we can obtain the estimate of dynamics of $\underline{\theta}$ as follows,
\begin{equation}\label{D-2-9}
m\ddot{\underline{\theta}}(t)+\dot{\underline{\theta}}(t)) \geq  \underline{\Omega} - N\kappa \sin \alpha +\frac{\eta \kappa \cos \alpha \sin \gamma}{\gamma M_N} D_\theta(t).
\end{equation}
Now, we recall the definition of $Q$ in \eqref{Q_function} and  combine \eqref{D-2-8} and \eqref{D-2-9} to obtain the following estimate of $Q$
\begin{equation*}
\begin{aligned}
&m\ddot{Q}(t) + \dot{Q}(t) = m\ddot{\bar{\theta}}(t) + \dot{\bar{\theta}}(t) - (m\ddot{\underline{\theta}}(t) + \dot{\underline{\theta}}(t))\\
&\le D_\Omega + 2N\kappa \sin \alpha  -  \frac{2\eta \kappa \cos \alpha \sin \gamma}{\gamma M_N} D_\theta(t)\\
&\le D_\Omega + 2N\kappa \sin \alpha  -  \frac{2\eta \kappa \cos \alpha \sin \gamma}{\gamma M_N} Q(t), \quad t \in I_l.
\end{aligned}
\end{equation*}
\end{proof}

Remark that the above estimate of the dynamics of $Q(t)$ in Lemma \ref{second_Q_eq} is not sufficient to derive the uniform boundedness of phase diameter. The difficulty mainly comes from the second-order derivative of $Q(t)$, which indeed is a convex combination of accelerations $\ddot{\theta}_i$. Hence, we next provide a rough estimate on the dynamics of $A(t)$ defined in \eqref{A_function}, which is equivalent to the diameter of oscillators' accelerations. To this end, we directly differentiate $\eqref{KMI}_1$ with respect to time $t$ to get
\begin{equation}\label{w_KMI}
m \ddot{\omega}_i(t) + \dot{\omega}_i(t) = \kappa \sum_{j \in \mathcal{N}_i} \cos(\theta_j(t) - \theta_i(t) + \alpha)(\omega_j(t) - \omega_i(t)), \quad t > 0, \quad i \in V.
\end{equation}
Recalling $a_i(t) = \dot{\omega}_i(t)$ in \eqref{definition_wab}, one has
\begin{equation}\label{first_a_system}
m \dot{a}_i(t) + a_i(t) = \kappa \sum_{j \in \mathcal{N}_i} \cos(\theta_j(t) - \theta_i(t) + \alpha)(\omega_j(t) - \omega_i(t)).
\end{equation}

\begin{lemma}\label{first_A_eq}
Let $\theta(t)$ be a solution to system \eqref{KMI}. Then, we have
\begin{equation}\label{first_A_differential}
m \dot{A}(t) + A(t) \le \frac{2N\kappa}{\eta} P(t), \quad t \in I_l, \ l=1,2,\cdots,
\end{equation}
where $I_l$ is defined in \eqref{divide_time}.
\end{lemma}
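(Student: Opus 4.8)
The plan is to establish \eqref{first_A_differential} by a direct, term-by-term estimate on the acceleration dynamics, exploiting the fact that on a single interval $I_l$ the ordering of the accelerations is frozen so that $A(t)$ is a genuine $C^1$ convex combination there. First I would fix $l$ and, after relabelling if necessary (as in the proof of Lemma \ref{second_Q_eq}), assume the accelerations are sorted on $I_l$ as $a_1(t) \le a_2(t) \le \cdots \le a_N(t)$. Since the permutation defining $\bar{a}$ and $\underline{a}$ in \eqref{coef-convex}, \eqref{A_function} is constant on $I_l$, I may differentiate termwise to get
\begin{equation*}
m\dot{\bar{a}}(t) + \bar{a}(t) = \frac{1}{\sum_{k=1}^N M_k}\sum_{i=1}^N M_i\bigl(m\dot{a}_i(t) + a_i(t)\bigr),
\end{equation*}
and the analogous identity for $\underline{a}$, so the problem reduces to bounding each $m\dot{a}_i + a_i$.

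Next I would invoke the differentiated equation \eqref{first_a_system}, namely $m\dot{a}_i + a_i = \kappa \sum_{j \in \mathcal{N}_i}\cos(\theta_j - \theta_i + \alpha)(\omega_j - \omega_i)$, and estimate it \emph{roughly}: using $|\cos(\cdot)| \le 1$, $|\omega_j - \omega_i| \le D_\omega(t)$, and $|\mathcal{N}_i| \le N-1 \le N$, one obtains the two-sided bound $|m\dot{a}_i(t) + a_i(t)| \le \kappa N D_\omega(t)$ for every $i$. Converting the frequency diameter to $P(t)$ via the equivalence $\eta D_\omega(t) \le P(t)$ from \eqref{P_Dw_equiv} gives $D_\omega(t) \le P(t)/\eta$, hence $|m\dot{a}_i(t) + a_i(t)| \le \frac{\kappa N}{\eta}P(t)$ uniformly in $i$.

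Finally I would feed these uniform per-node bounds back into the convex combinations. Because $\bar{a}$ is a convex average of the $a_i$, the averaged quantity inherits the upper bound, $m\dot{\bar{a}}(t) + \bar{a}(t) \le \frac{\kappa N}{\eta}P(t)$, and symmetrically $m\dot{\underline{a}}(t) + \underline{a}(t) \ge -\frac{\kappa N}{\eta}P(t)$. Subtracting, and using $A = \bar{a} - \underline{a}$, yields
\begin{equation*}
m\dot{A}(t) + A(t) = \bigl(m\dot{\bar{a}} + \bar{a}\bigr) - \bigl(m\dot{\underline{a}} + \underline{a}\bigr) \le \frac{2N\kappa}{\eta}P(t),
\end{equation*}
which is exactly \eqref{first_A_differential}.

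In contrast to Lemma \ref{second_Q_eq}, I do not expect a genuine obstacle here: this is a coarse bound that simply discards the sign structure of the coupling by taking absolute values, so the delicate inductive argument exploiting strong connectivity is unnecessary. The only point requiring care is the justification that the ordering of the accelerations is locally constant on each $I_l$, which legitimizes the termwise differentiation of the convex combinations $\bar{a}$ and $\underline{a}$; this is guaranteed by the decomposition \eqref{divide_time}. Everything else is a routine application of the triangle inequality together with the equivalence \eqref{P_Dw_equiv}.
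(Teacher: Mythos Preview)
Your proposal is correct and follows essentially the same route as the paper: both fix an interval $I_l$ where the acceleration ordering is frozen, use the rough per-node bound $|m\dot a_i + a_i|\le N\kappa D_\omega$ from \eqref{first_a_system}, pass this uniform bound through the convex combinations $\bar a$ and $\underline a$, subtract, and then invoke \eqref{P_Dw_equiv} to replace $D_\omega$ by $P/\eta$. The only cosmetic difference is that the paper converts $D_\omega\to P/\eta$ after subtracting while you do it per node, which is immaterial.
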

\begin{proof}
We pick out any time interval $I_l$, and for $t \in I_l$,  we can find some permutation $p_1p_2\cdots p_N$ of $\{1,2,\cdots,N\}$ such that oscillators' accelerations are arranged from minimum to maximum as follows
\begin{equation*}\label{a_random_order2}
a_{p_1}(t) \le a_{p_2}(t) \le \cdots \le a_{p_N}(t).
\end{equation*}
According to \eqref{first_a_system}, we have the following simple estimates
\begin{equation}\label{D-3-1}
\begin{aligned}
-N \kappa D_\omega(t)\leq m \dot{a}_i(t) + a_i(t) \leq N \kappa D_\omega(t).
\end{aligned}
\end{equation}
As the estimate \eqref{D-3-1} is independent of $i$, we immediately conclude that the above inequality also holds for  $\bar{a}$ and $\underline{a}$, since they are convex combinations of $a_i$. Then we recall \eqref{A_function} and \eqref{P_Dw_equiv} to have
\begin{equation*}
\begin{aligned}
m \dot{A}(t) + A(t) 
& =  \left(m\dot{\bar{a}}(t) + \bar{a}(t) \right) - \left(m \dot{\underline{a}}(t) + \underline{a}(t) \right)\leq 2N \kappa D_\omega(t) \leq \frac{2N \kappa}{\eta} P(t).
\end{aligned}
\end{equation*}
\end{proof}

Now, we may use the damping term in the differential inequality of $A(t)$ presented in Lemma \ref{first_A_eq} to control the second order derivative of $Q(t)$ in \eqref{second_Q_differential}. However, the dynamics of $Q(t)$ and $A(t)$ are still not dissipative, since there is an additional term $P(t)$ on the right hand side of \eqref{first_A_differential}. This propels us to study the dynamics of $P(t)$ defined in \eqref{P_function}. For this, recall $\omega_i(t) = \dot{\theta}_i(t)$ in \eqref{definition_wab}, and then we see from \eqref{KMI} that
\begin{equation}\label{first_w_system}
m \dot{\omega}_i(t) + \omega_i(t) = \Omega_i + \kappa \sum_{j \in \mathcal{N}_i} \sin (\theta_j(t) - \theta_i(t) + \alpha), \quad t > 0.
\end{equation} 

\begin{lemma}\label{first_P_eq}
Let $\theta(t)$ be a solution to system \eqref{KMI}. Then, we have
\begin{equation}\label{first_P_differential}
m\dot{P}(t) + P(t) \le D_\Omega +2N \kappa \sin \alpha +  \frac{ 2N \kappa \cos \alpha}{\eta} Q(t), \quad t \in I_l, \quad l =1,2,\cdots,
\end{equation}
where $I_l$ is defined in \eqref{divide_time}.
\end{lemma}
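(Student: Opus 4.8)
The plan is to follow the same skeleton as Lemma \ref{second_Q_eq}, but the argument here is considerably softer. Since the coupling in the frequency system \eqref{first_w_system} is driven by \emph{phase} differences rather than by frequency differences, I do not expect to extract any dissipation proportional to $P(t)$ itself, only a forcing term controlled by $Q(t)$. Consequently, crude bounds will suffice and no strong-connectivity induction is needed. Working on a fixed interval $I_l$ (on which the frequency ordering $\omega_{l_1}\le\cdots\le\omega_{l_N}$, and hence the convex weights, are constant), I would begin by writing, from the definition \eqref{P_function} of $P = \bar\omega - \underline\omega$,
\begin{equation*}
m\dot{P}(t) + P(t) = \bigl(m\dot{\bar\omega}(t) + \bar\omega(t)\bigr) - \bigl(m\dot{\underline\omega}(t) + \underline\omega(t)\bigr),
\end{equation*}
and recall that $\bar\omega$ and $\underline\omega$ are convex combinations of the $\omega_i$ with the weights from \eqref{coef-convex}, so that within $I_l$ the time differentiation distributes through the (constant-coefficient) sums.

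The core step is a two-sided bound on the operator $m\dot\omega_i + \omega_i$ that is \emph{uniform in} $i$. Starting from \eqref{first_w_system} and using the addition formula
\begin{equation*}
\sin(\theta_j - \theta_i + \alpha) = \cos\alpha\,\sin(\theta_j - \theta_i) + \sin\alpha\,\cos(\theta_j - \theta_i),
\end{equation*}
I would bound $\cos(\theta_j - \theta_i)\le 1$, estimate $\sin(\theta_j - \theta_i)\le |\theta_j - \theta_i| \le D_\theta(t)$, and use that each vertex has at most $N$ neighbors. This produces
\begin{equation*}
m\dot\omega_i(t) + \omega_i(t) \le \bar\Omega + N\kappa\sin\alpha + N\kappa\cos\alpha\,D_\theta(t),
\end{equation*}
together with the symmetric lower bound featuring $\underline\Omega$ and the reversed signs. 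I would note that $\sin y \le |y|$ holds for every real $y$, so this estimate needs no a priori phase-cohesiveness hypothesis, which explains why the statement of the lemma carries no constraint $D_\theta < \gamma$.

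Because these bounds are independent of $i$, they transfer verbatim to the convex combinations: the upper bound passes to $m\dot{\bar\omega} + \bar\omega$ and the lower bound to $m\dot{\underline\omega} + \underline\omega$. Subtracting, the natural-frequency contributions combine into $D_\Omega = \bar\Omega - \underline\Omega$ while the remaining two terms double, giving
\begin{equation*}
m\dot{P}(t) + P(t) \le D_\Omega + 2N\kappa\sin\alpha + 2N\kappa\cos\alpha\,D_\theta(t).
\end{equation*}
Finally I would invoke the comparison $D_\theta(t) \le Q(t)/\eta$ from \eqref{Q_Dtheta_equiv} to replace $D_\theta$ by $Q/\eta$, yielding exactly the claimed coefficient $2N\kappa\cos\alpha/\eta$ in front of $Q(t)$. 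The point worth flagging is conceptual rather than computational: this inequality is \emph{not} dissipative in $P$, so by itself it cannot close a Gronwall argument. The genuine difficulty is therefore deferred to the later stage where $P$, $Q$, $A$ (and $B$) are assembled into the weighted energies $\mathcal{E}_1,\mathcal{E}_2$, so that the dissipation missing from \eqref{first_P_differential} is recovered from the damping terms of the remaining functions.
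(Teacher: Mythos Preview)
Your proposal is correct and follows precisely the approach the paper indicates (``similar to Lemma \ref{first_A_eq}''): obtain two-sided bounds on $m\dot\omega_i+\omega_i$ that are uniform in $i$, pass them to the convex combinations $\bar\omega,\underline\omega$, subtract, and then invoke \eqref{Q_Dtheta_equiv}. Your use of the addition formula to separate the $\sin\alpha$ and $\cos\alpha$ contributions is exactly what is needed to recover the stated constants, and your observation that no $D_\theta<\gamma$ hypothesis is required (since $|\sin y|\le|y|$ holds unconditionally) is a correct and useful remark.
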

\begin{proof}
The proof is similar to Lemma \ref{first_A_eq} and we omit the details.
\end{proof}
In the sequel, we define an energy function
\begin{equation}\label{energy1}
\mathcal{E}_1(t) := Q(t) +  \frac{\eta^2 \sin \gamma }{2N \gamma M_N} mP(t) +  2m^2 A(t), \quad t \ge 0.
\end{equation}
 We will present a Gronwall-type inequality of $\mathcal{E}_1(t)$,  which will govern the time-asymptotic behavior of the function $\mathcal{E}_1(t)$.

\begin{lemma}\label{E1_eq}
Let $\theta(t)$ be a solution to system \eqref{KMI} on a strongly connected digraph. Suppose that the initial phase diameter is less than $\pi$, i.e., there exists a positive constant $\gamma$ such that
\[D_\theta(0) < \gamma <\pi.\]
Then, for sufficiently large $c$ and $\kappa$ and sufficiently small $m$ and $\alpha$ such that 
\[\kappa\gg 1,\quad 0 < m \ll 1,\quad 0<\kappa m\ll1,\quad 0<\alpha\ll 1, \]
 the following differential inequality holds 
\begin{equation}\label{E1_Gronwall}
\frac{d}{dt}\mathcal{E}_1(t) \le  2\left( D_\Omega + 2N\kappa \sin \alpha\right) - \Lambda \mathcal{E}_1(t), \quad \text{a.e.} \ t \ge 0,
\end{equation}
where $\Lambda$ is given in \eqref{Lambda}.
\end{lemma}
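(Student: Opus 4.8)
The plan is to differentiate $\mathcal{E}_1$ and feed in the three differential inequalities already obtained for $Q$, $P$ and $A$, with the weights chosen so that the cross (forcing) terms are swallowed by the built‑in damping. Fix one interval $I_l$ from the decomposition \eqref{divide_time}, on which the orderings of phases, frequencies and accelerations are frozen, so that $Q,P,A$ are smooth there. Writing $c_1:=\frac{\eta^2\sin\gamma}{2N\gamma M_N}$ for the frequency weight, differentiation of \eqref{energy1} gives
\begin{equation*}
\frac{d}{dt}\mathcal{E}_1 = \dot{Q} + c_1 m\dot{P} + 2m^2\dot{A}.
\end{equation*}
I would then substitute $\dot Q=(m\ddot Q+\dot Q)-m\ddot Q$ and apply Lemma \ref{second_Q_eq} to the parenthesis, apply Lemma \ref{first_P_eq} to the $c_1 m\dot P$ term, and apply Lemma \ref{first_A_eq} (in the form $2m^2\dot A=2m\cdot m\dot A$) to the last term.

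Next I would track the coefficient of each diameter. The coefficient of $Q$ collects the damping $-\frac{2\eta\kappa\cos\alpha\sin\gamma}{\gamma M_N}Q$ from Lemma \ref{second_Q_eq} and the forcing $+\frac{2N\kappa\cos\alpha}{\eta}Q$ from Lemma \ref{first_P_eq}; the weight $c_1$ is designed precisely so that the forcing contributes exactly $+\frac{\eta\kappa\cos\alpha\sin\gamma}{\gamma M_N}Q$, leaving the net $-\frac{\eta\kappa\cos\alpha\sin\gamma}{\gamma M_N}Q$, i.e. minus the first term of \eqref{Lambda} times $Q$. Likewise the coefficient of $P$ combines the intrinsic damping $-c_1 P$ with the forcing $+\frac{4N\kappa m}{\eta}P$ from Lemma \ref{first_A_eq}; factoring it as $-c_1 m\bigl(\frac1m-\frac{8N^2\kappa\gamma M_N}{\eta^3\sin\gamma}\bigr)P$ produces exactly the second term of \eqref{Lambda}. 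Both matchings are purely algebraic. The constant source is $(1+c_1)(D_\Omega+2N\kappa\sin\alpha)$, which is bounded by $2(D_\Omega+2N\kappa\sin\alpha)$ since $c_1<1$.

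The genuinely delicate step, which I expect to be the main obstacle, is the second‑order term $-m\ddot Q$: it has no analogue in a first‑order Gronwall scheme and must be absorbed into the acceleration block. On $I_l$ one has $\ddot Q=\ddot{\bar\theta}-\ddot{\underline\theta}$, a difference of two convex combinations of the accelerations $\ddot\theta_i=a_i$, whence $|\ddot Q|\le D_a$; combined with \eqref{A_Da_equiv} this yields $-m\ddot Q\le mD_a\le \frac m\eta A$. This is exactly why the term $2m^2A$ is carried in \eqref{energy1}: the resulting coefficient of $A$ is $-2m+\frac m\eta=m\bigl(\frac1\eta-2\bigr)$, and the remaining task is to check that this is dominated by $-2\Lambda m^2$, i.e. that $2\Lambda m\le 2-\frac1\eta$. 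Here the third term $\frac1{2m}$ of \eqref{Lambda}, the largeness of $c$ (so that $\eta\to1$ by \eqref{eta}), and the smallness constraints \eqref{mk_con1}--\eqref{mk_con4} (which in fact keep $\Lambda$ controlled by its first two, small, terms) must be used together to close this inequality. Collecting the three coefficients then gives \eqref{E1_Gronwall} on $I_l$.

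Finally I would pass from the intervals to the whole half‑line: $Q,P,A$ are continuous across the endpoints $t_l$ and smooth on each open $I_l$, so the pointwise inequality on every $I_l$ upgrades to an almost‑everywhere inequality on $[0,+\infty)$. Two points require care in the write‑up. First, Lemma \ref{second_Q_eq} needs the standing hypothesis $D_\theta(t)<\gamma$, which is not assumed a priori; it is propagated by a continuity (bootstrap) argument: \eqref{mk_con2} gives $\mathcal{E}_1(0)<\eta(1-\varepsilon)\gamma$, and integrating \eqref{E1_Gronwall} together with \eqref{mk_con3} (which bounds $\tfrac{2(D_\Omega+2N\kappa\sin\alpha)}{\Lambda}$) keeps $\mathcal{E}_1(t)<\eta(1-\varepsilon)\gamma$, so that $D_\theta(t)\le\eta^{-1}Q(t)\le\eta^{-1}\mathcal{E}_1(t)<(1-\varepsilon)\gamma<\gamma$ for all $t$, sustaining the hypothesis. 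Second, the equivalences \eqref{Q_Dtheta_equiv}, \eqref{P_Dw_equiv}, \eqref{A_Da_equiv} are what let me pass freely between the diameters $D_\theta,D_\omega,D_a$ and the convex‑combination functions $Q,P,A$ throughout.
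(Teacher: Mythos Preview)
Your overall strategy coincides with the paper's: combine Lemmas \ref{second_Q_eq}, \ref{first_P_eq}, \ref{first_A_eq} with the weights built into $\mathcal{E}_1$, match the coefficients of $Q$, $P$, $A$, and run a continuity (bootstrap) argument to keep $D_\theta<\gamma$. The coefficient bookkeeping for $Q$ and $P$, the bound on the constant source by $2(D_\Omega+2N\kappa\sin\alpha)$, and the bootstrap at the end are all essentially what the paper does.

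There is, however, one loose step that prevents you from recovering the stated $\Lambda$. You estimate the second--order term by $|\ddot Q|\le D_a\le A/\eta$, which leaves the $A$--coefficient equal to $m(\tfrac1\eta-2)$ and forces the side condition $2\Lambda m\le 2-\tfrac1\eta$. Since $\eta<1$, this is strictly weaker than $\Lambda\le\tfrac1{2m}$, and your suggested escape (that $\Lambda$ is governed by the first two entries of \eqref{Lambda}) is not guaranteed by \eqref{mk_con1}--\eqref{mk_con4}: the second entry of \eqref{Lambda} can be arbitrarily close to $\tfrac1m$, making the minimum exactly $\tfrac1{2m}$, and then your inequality fails. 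The fix is a sharper pointwise bound: on $I_l$ the functions $\ddot{\bar\theta}$ and $\ddot{\underline\theta}$ are convex combinations of the $a_i$ using the \emph{same} weight multiset $\{M_i/\sum_j M_j\}$ as $\bar a,\underline a$, merely assigned according to the phase (rather than acceleration) ordering. By the rearrangement inequality, any such combination lies in $[\underline a,\bar a]$, so $|\ddot Q|\le \bar a-\underline a = A$ directly, without the $1/\eta$ loss. Then the $A$--block reads $-2mA - m\ddot Q = -mA - m(A+\ddot Q)\le -mA$, and the required comparison is exactly $\Lambda\le \tfrac1{2m}$, matching the third entry of \eqref{Lambda}. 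With this single correction your proposal is the paper's proof.
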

\begin{proof}
As $D_\theta(0)<\gamma < \pi$, we apply the continuity of $D_\theta(t)$ to conclude that $D_\theta(t)<\gamma$ in a short time, and thus we can set the following non-empty set  
\begin{equation*}
\mathcal{S} = \{T > 0 \ | \ D_\theta(t) < \gamma < \pi, \ \forall \ 0 \le t < T \},
\end{equation*}
and define $T^* = \sup \mathcal{S}$.  

\noindent $\bullet$ {\bf Step 1:} In the first step, we will prove \eqref{E1_Gronwall} holds before $T^*$. We divide the time interval $[0, T^*)$ into a union of finitely many intervals
\begin{equation*}
[0,T^*) = \bigcup_{l=1}^n J_l, \quad J_l = [\tau_{l-1}, \tau_l), \quad \tau_0 = 0,
\end{equation*}
such that in each time interval $J_l$, the orders of oscillators's phases, frequencies and accelerations are unchanged. Then in any interval $J_l$, we apply Lemma \ref{second_Q_eq}, Lemma \ref{first_A_eq} and Lemma \ref{first_P_eq} to obtain that for $t \in J_l$,
\begin{align}
&m \ddot{Q}(t) + \dot{Q}(t)  \le D_\Omega + 2N\kappa \sin \alpha  -   \frac{2\eta \kappa \cos \alpha \sin \gamma}{\gamma M_N} Q(t), \label{second_Q_diffe2}\\
&m\dot{P}(t) + P(t) \le D_\Omega +2N \kappa \sin \alpha +  \frac{ 2N \kappa \cos \alpha}{\eta} Q(t),\label{first_P_diffe2}\\
&m \dot{A}(t) + A(t) \le \frac{2N\kappa}{\eta} P(t). \label{first_A_diffe2}
\end{align} 
In order to gain the dissipation of the system, we need to combine above inequalities together. We multiply \eqref{first_P_diffe2} by $\frac{\eta^2 \sin \gamma}{2N \gamma M_N}$ to get
\begin{equation}\label{F-1}
\begin{aligned}
\frac{\eta^2 \sin \gamma}{2N \gamma M_N} m\dot{P}(t) + \frac{\eta^2 \sin \gamma}{2N \gamma M_N} P(t) \le \frac{\eta^2 \sin \gamma}{2N \gamma M_N} \left( D_\Omega + 2N\kappa \sin \alpha\right) +  \frac{\eta \kappa \cos \alpha \sin \gamma}{\gamma M_N} Q(t).
\end{aligned}
\end{equation}
Moreover, multiplying \eqref{first_A_diffe2} by $2m$, one has
\begin{equation}\label{F-2}
2m^2 \dot{A}(t) + 2mA(t) \le \frac{4Nm\kappa}{\eta} P(t).
\end{equation}
Then, we add \eqref{second_Q_diffe2}, \eqref{F-1} and \eqref{F-2} together to obtain
\begin{equation*}
\begin{aligned}
& \dot{Q}(t) + \frac{\eta^2 \sin \gamma}{2N \gamma M_N}  m\dot{P}(t)  + 2m^2 \dot{A}(t)  \\
&\le 2(D_\Omega + 2N\kappa \sin \alpha)  - \frac{\eta \kappa \cos \alpha \sin \gamma}{\gamma M_N} Q(t)  -\left( \frac{\eta^2 \sin \gamma}{2N \gamma M_N} - \frac{4Nm\kappa}{\eta} \right)P(t) \\
&-mA(t)-m(A(t) + \ddot{Q}(t)) .
\end{aligned}
\end{equation*} 
Among the convex combination of $a_i(t)$, we know $A(t)$ is the largest one. Therefore, we have $A(t) \geq |\ddot{Q}(t)|$ and above inequality implies 
\begin{equation}\label{F-1-3}
\begin{aligned}
& \dot{Q}(t) + \frac{\eta^2 \sin \gamma}{2N \gamma M_N}  m\dot{P}(t)  + 2m^2 \dot{A}(t)  \\
&\le 2(D_\Omega + 2N\kappa \sin \alpha)  - \frac{\eta \kappa \cos \alpha \sin \gamma}{\gamma M_N} Q(t)  -\left( \frac{\eta^2 \sin \gamma}{2N \gamma M_N} - \frac{4Nm\kappa}{\eta} \right)P(t) -mA(t) .
\end{aligned}
\end{equation} 
Under the assumption of small $m\kappa$,
 above inequality \eqref{F-1-3} provides a dissipation of the energy $\mathcal{E}_1(t)$. More precisely, according to \eqref{mk_con1} and the definition of $\mathcal{E}_1(t)$ in \eqref{energy1},
we apply \eqref{F-1-3} to have 
\begin{equation}\label{F-1-4}
 \frac{d}{dt}\mathcal{E}_1(t) \le 2(D_\Omega + 2N\kappa \sin \alpha)  - \Lambda \mathcal{E}_1(t),\quad t  \in J_l,
\end{equation} 
where $\Lambda$ is defined in \eqref{Lambda}, i.e.,
\begin{equation}\label{F-1-5}
\Lambda= \min\left\{\frac{\eta \kappa \cos \alpha \sin \gamma}{\gamma M_N},\ \frac{1}{m} - \frac{8 N^2 \kappa \gamma M_N}{\eta^3 \sin \gamma},\  \frac{1}{2m}\right\}.
\end{equation}
It is obvious that the energy $\mathcal{E}_1(t)$ is Lipschitz continuous and thus we immediately conclude that \eqref{F-1-4} holds for $[0,T^*) = \bigcup_{l=1}^n J_l$ except for the end point of $J_l$.\newline

\noindent $\bullet$ {\bf Step 2:} In this step, we will use continuity criteria to prove $T^*=+\infty$. We will use proof of contradiction. Suppose  $T^*<+\infty$, then we apply \eqref{F-1-4} to have 
\begin{equation}\label{F-1-6}
\mathcal{E}_1(t)\leq \max\left\{ \mathcal{E}_1(0),\  \frac{2(D_\Omega + 2N\kappa \sin \alpha)}{\Lambda}\right\},\quad t<T^*. 
\end{equation}
On the other hand, we know that
 when $c$ is sufficiently large, the ratio $\eta$ defined in Lemma \ref{Z_Dz_equiv} will be very close to one. 
Therefore, according to $D_\theta(0)<\gamma$ and the formula of $D_a(0)$ in \eqref{a}, we may let $c$ to be sufficiently large, $m\ll1$, $m\kappa \ll 1$ and $\varepsilon\ll 1$ to guarantee that 
\begin{equation}\label{F-1-1}
\mathcal{E}_1(0)\le D_\theta(0) +  \frac{\eta^2 \sin \gamma}{2N \gamma M_N} mD_\omega(0) +  2m^2 D_a(0) < \eta (1-\varepsilon)\gamma < \pi.
\end{equation}
Moreover, according to \eqref{F-1-5}, we know that $\Lambda = \min\{\mathcal{O}(1)\kappa,\ \mathcal{O}(1)\frac{1}{m}\}$, which means 
\begin{equation*}
\frac{2(D_\Omega + 2N\kappa \sin \alpha)}{\Lambda}= \max\left\{\mathcal{O}(1)(\frac{1}{\kappa}+\sin\alpha),\ \mathcal{O}(1)m,\ \mathcal{O}(1)m\kappa \right\}.
\end{equation*}
Therefore, for $\kappa\gg 1$, $\alpha\ll1$, $m\ll1$ and $m\kappa \ll1$, we have 
\begin{equation}\label{F-1-7}
\frac{2(D_\Omega + 2N\kappa \sin \alpha)}{\Lambda} <\eta (1-\varepsilon)\gamma.
\end{equation}
We combine \eqref{F-1-6}, \eqref{F-1-1} and \eqref{F-1-7} to have 
\[\mathcal{E}_1(t)\leq \eta (1-\varepsilon) \gamma, \quad t<T^*.\]
Above estimate immediately implies that $\mathcal{E}_1(T^*) \leq  \eta (1-\varepsilon) \gamma$ due to the continuity of $\mathcal{E}_1(t)$. Then, we recall Lemma \ref{Z_Dz_equiv} and the definition of $\mathcal{E}_1(t)$ to have 
\begin{align*}
D_\theta(T^*)\leq \frac{1}{\eta} Q(T^*) \leq \frac{1}{\eta}\mathcal{E}_1(T^*)\leq (1-\varepsilon) \gamma <\gamma,  \label{F-1-8}
\end{align*}
which is an obvious contradiction to $D_\theta(T^*)=\gamma$ following from the definition of $T^*$.
Therefore, we conclude that $T^*=+\infty$, which finishes the proof.
\end{proof}

\subsection{Entrance to small region} Now, we are ready to show that all Kuramoto oscillators will be trapped into a quarter circle in finite time. 
\begin{lemma}\label{small_region}
Let $\theta(t)$ be a solution to system \eqref{KMI} on a strongly connected digraph. Suppose that the initial phase diameter is less than $\pi$, i.e., there exists a positive constant $\gamma$ such that 
\[D_\theta(0) < \gamma <\pi.\]
Then, for sufficiently large $c$ and $\kappa$ and sufficiently small $m$ and $\alpha$ such that 
\[\kappa\gg 1, \quad 0<\kappa m\ll1,\quad 0<\alpha\ll 1,\]
we can find a finite time $t_*$ such that 
\begin{equation*}
D_\theta(t) < D^\infty < \frac{\pi}{2} , \quad \forall \  t \ge t_*,
\end{equation*}
where $D^\infty < \frac{\pi}{2}$ is a positive constant.
\end{lemma}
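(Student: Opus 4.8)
The plan is to read off the desired phase-diameter bound directly from the Gronwall-type inequality \eqref{E1_Gronwall} for the energy $\mathcal{E}_1(t)$ proved in Lemma \ref{E1_eq}, and then transfer the decay of $\mathcal{E}_1$ back to $D_\theta$ via the comparison \eqref{Q_Dtheta_equiv} together with the elementary observation that $\mathcal{E}_1$ dominates $Q$. The heavy lifting has already been done in Lemma \ref{E1_eq}, so the present statement is essentially a corollary of it combined with the smallness constraint \eqref{mk_con3}.

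First I would integrate \eqref{E1_Gronwall}. Since $\mathcal{E}_1(t)$ is Lipschitz continuous and the inequality holds for a.e.\ $t \ge 0$, multiplying by the integrating factor $e^{\Lambda t}$ and integrating from $0$ to $t$ yields the explicit estimate
\[
\mathcal{E}_1(t) \le \mathcal{E}_1(0)\,e^{-\Lambda t} + \frac{2(D_\Omega + 2N\kappa \sin \alpha)}{\Lambda}\bigl(1 - e^{-\Lambda t}\bigr), \quad t \ge 0.
\]
Writing $L := \frac{2(D_\Omega + 2N\kappa \sin \alpha)}{\Lambda}$, this shows that $\mathcal{E}_1(t)$ relaxes exponentially fast (at rate $\Lambda > 0$, cf.\ \eqref{Lambda}) toward the asymptotic floor $L$.

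Next I would invoke the parameter constraint \eqref{mk_con3}, which is precisely the statement $\frac{L}{\eta} < \frac{D^\infty}{2}$, equivalently $L < \eta\,\frac{D^\infty}{2} < \eta D^\infty$. Because $L < \eta D^\infty$ and the transient term $\mathcal{E}_1(0)e^{-\Lambda t}$ decays to zero, there is a finite time $t_*$ past which $\mathcal{E}_1(t) < \eta D^\infty$; explicitly one may take $t_* = 0$ if $\mathcal{E}_1(0) \le L$, and otherwise
\[
t_* = \frac{1}{\Lambda}\ln\frac{\mathcal{E}_1(0) - L}{\eta D^\infty - L},
\]
the argument of the logarithm being well defined since $\eta D^\infty - L > \eta\,\frac{D^\infty}{2} > 0$. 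Finally I would convert this into the claimed bound: from \eqref{energy1} the two correction terms $\frac{\eta^2 \sin\gamma}{2N\gamma M_N}mP(t)$ and $2m^2 A(t)$ are nonnegative, so $\mathcal{E}_1(t) \ge Q(t)$, and \eqref{Q_Dtheta_equiv} gives $Q(t) \ge \eta D_\theta(t)$. Hence for all $t \ge t_*$,
\[
D_\theta(t) \le \frac{1}{\eta}\,Q(t) \le \frac{1}{\eta}\,\mathcal{E}_1(t) < D^\infty < \frac{\pi}{2},
\]
which is the assertion of the lemma.

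The only genuinely delicate point is verifying that the asymptotic floor $L/\eta$ of the phase diameter truly lies below $D^\infty/2$; this is exactly what \eqref{mk_con3} guarantees, and it is made consistent by balancing the scalings $\kappa \gg 1$, $\alpha \ll 1$, $m\kappa \ll 1$ so that the forcing $D_\Omega + 2N\kappa\sin\alpha$ remains small relative to $\Lambda = \min\{\mathcal{O}(1)\kappa,\ \mathcal{O}(1)/m\}$. Apart from this bookkeeping, no new obstacle arises, since the global-in-time validity of $D_\theta(t) < \gamma$ (i.e.\ $T^* = +\infty$) was already secured inside the proof of Lemma \ref{E1_eq}, which is what allows \eqref{E1_Gronwall}, and hence the above integration, to be used on all of $[0,+\infty)$.
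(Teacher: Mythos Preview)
Your proposal is correct and follows essentially the same approach as the paper: both use the Gronwall inequality \eqref{E1_Gronwall} from Lemma \ref{E1_eq}, the constraint \eqref{mk_con3}, and the chain $D_\theta \le Q/\eta \le \mathcal{E}_1/\eta$ to conclude. The only cosmetic difference is that you integrate \eqref{E1_Gronwall} directly via the integrating factor to get an explicit exponential envelope, whereas the paper splits into two cases (small vs.\ large $\mathcal{E}_1(0)$) and in the latter case uses the cruder linear-decay bound $\dot{\mathcal{E}}_1 \le -(D_\Omega + 2N\kappa\sin\alpha)$ until $\mathcal{E}_1$ drops to a fixed multiple of $L$; both routes yield a finite $t_*$ with the same conclusion.
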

\begin{proof}
From \eqref{mk_con3}, \eqref{F-1-5} and the analysis in the previous lemma, we know that 
\begin{equation*}
 \frac{4(D_\Omega + 2N\kappa \sin \alpha)}{\eta\Lambda}= \max\left\{\mathcal{O}(1)(\frac{1}{\kappa}+\sin\alpha),\ \mathcal{O}(1)m,\ \mathcal{O}(1)m\kappa \right\} < D^\infty .
\end{equation*}
which can be acheived when $\kappa$ is large and $\alpha,\ m,\ m\kappa$ are small. 
Moreover, it follows from Lemma \ref{E1_eq} and \eqref{F-1-6} that
\begin{equation}\label{G-1}
\begin{aligned}
\mathcal{E}_1(t)\leq \max\left\{ \mathcal{E}_1(0),\  \frac{2(D_\Omega + 2N\kappa \sin \alpha)}{\Lambda}\right\},\quad t\geq 0.
\end{aligned}
\end{equation}
We then split the proof into the following two cases.

\noindent$\diamond$ {\bf Case 1:} For the case that
\begin{equation*}
\mathcal{E}_1(0) <  \frac{4(D_\Omega + 2N\kappa \sin \alpha)}{\Lambda},
\end{equation*}
we observe from \eqref{G-1} that 
\begin{equation*}
\begin{aligned}
 \mathcal{E}_1(t) \leq \max\left\{ \mathcal{E}_1(0),\  \frac{2(D_\Omega + 2N\kappa \sin \alpha)}{\Lambda}\right\}< \frac{4(D_\Omega + 2N\kappa \sin \alpha)}{\Lambda},\quad t\geq 0.
\end{aligned}
\end{equation*}
Thus, we let $t_*=0$ and obtain that
\begin{equation*}
D_\theta(t)\leq \frac{1}{\eta}Q(t)\leq \frac{1}{\eta}\mathcal{E}_1(t) < \frac{4(D_\Omega + 2N\kappa \sin \alpha)}{\eta\Lambda} <   D^\infty, \quad \text{for} \ t  \ge t_*.
\end{equation*}

\noindent $\diamond$ {\bf Case 2:} For the other case that
\begin{equation*}
\mathcal{E}_1(0) \ge \frac{4(D_\Omega + 2N\kappa \sin \alpha)}{\Lambda},
\end{equation*}
we consider the set 
 \[S=\{t\ |\ \mathcal{E}_1(s)\geq \frac{3(D_\Omega + 2N\kappa \sin \alpha)}{\Lambda},\quad \forall \  0\leq s\leq t\}.\] 
 It is obvious that the set is non-empty since $0\in S$ and $\mathcal{E}_1(t)$ is continuous. Therefore, we may define $t_*=\sup S$. Then, from Lemma \ref{E1_eq}, we have
\begin{equation}\label{G-1-2}
\begin{aligned}
\frac{d}{dt}\mathcal{E}_1(t)& \le  2\left( D_\Omega + 2N\kappa \sin \alpha\right) - \Lambda \mathcal{E}_1(t)\leq -\left( D_\Omega + 2N\kappa \sin \alpha\right),\quad \text{for} \  0 \le t< t_*.
\end{aligned}
\end{equation}
This means $\mathcal{E}_1(t)$ will keep decreasing with a constant rate for $t<t_*$, and from the definition of $t_*$, we have 
\[\mathcal{E}_1(t_*)= \frac{3(D_\Omega + 2N\kappa \sin \alpha)}{\Lambda}.\]
Moreover, similar to \eqref{G-1}, we know that 
\[\mathcal{E}_1(t)\leq  \max\left\{ \mathcal{E}_1(t_*),\  \frac{2(D_\Omega + 2N\kappa \sin \alpha)}{\Lambda}\right\}\le \frac{3(D_\Omega + 2N\kappa \sin \alpha)}{\Lambda},\quad \text{for} \ t\geq t_*.\]
Above analysis means that $\mathcal{E}_1(t)$ will decrease to the interval $[0,\ \frac{3(D_\Omega + 2N\kappa \sin \alpha)}{\Lambda}]$ at $t_*$ and stay in this region for $t\geq t_*$. Now, we integrate \eqref{G-1-2} from $0$ to $t_*$ and obtain that
\[\mathcal{E}_1(t_*)-\mathcal{E}_1(0)\leq -\left( D_\Omega + 2N\kappa \sin \alpha\right)t_*,\]
which implies that 
\[t_*\leq \frac{\mathcal{E}_1(0)-\mathcal{E}_1(t_*)}{\left( D_\Omega + 2N\kappa \sin \alpha\right)}.\]
Therefore, $t_*$ is a finite time and we have for $t\geq t_*$ that
\[D_\theta(t)\leq \frac{1}{\eta}Q(t)\leq \frac{1}{\eta}\mathcal{E}_1(t) \leq  \frac{1}{\eta}\mathcal{E}_1(t_*)=\frac{3(D_\Omega + 2N\kappa \sin \alpha)}{\eta\Lambda}< \frac{4(D_\Omega + 2N\kappa \sin \alpha)}{\eta\Lambda}<   D^\infty.\]
\end{proof}

\section{Frequency synchronization}\label{sec:4}
\setcounter{equation}{0}
In this section, we present that the complete frequency synchronization of system \eqref{KMI} will emerge exponentially fast. Similar to Section \ref{sec:3}, we will introduce an energy function consisting of three functions of convex combination type, which are respectively equivalent to the diameters of frequencies, accelerations and jerks.
Then, we derive one first-order Gronwall-type inequality of this energy function, which ultimately leads to the exponential emergence of frequency synchronization.

To this end, we recall \eqref{first_w_system} and obtain the following rough estimate of $D_\omega(t)$.
\begin{lemma}\label{4-1}
Let $\theta(t)$ be a solution to system \eqref{KMI}. Then, we have 
\[D_\omega(t)\leq D_\omega(0)+D_\Omega+2N\kappa,\quad t\geq 0.\]
\end{lemma}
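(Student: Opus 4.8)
The plan is to bound the frequency diameter directly through its maximal and minimal components, rather than through the convex-combination function $P(t)$, since here only a crude a priori ceiling is needed and the coupling sum can be controlled trivially. Starting from the first-order frequency system \eqref{first_w_system}, I would first observe that for every index $i$ and every $t$,
\[
\Big| \kappa \sum_{j \in \mathcal{N}_i} \sin(\theta_j(t) - \theta_i(t) + \alpha) \Big| \le \kappa |\mathcal{N}_i| \le N\kappa,
\]
since $|\sin| \le 1$ and $|\mathcal{N}_i| \le N$. Combined with $\underline{\Omega} \le \Omega_i \le \bar{\Omega}$, this yields the two-sided pointwise bound $-\omega_i + \underline{\Omega} - N\kappa \le m\dot{\omega}_i \le -\omega_i + \bar{\Omega} + N\kappa$ valid for each $i$.

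Next I would exploit the interval decomposition \eqref{divide_time}: on each $I_l$ the ordering $\omega_{l_1} \le \cdots \le \omega_{l_N}$ is frozen, so the extremal frequencies $\omega_{l_N}$ (maximum) and $\omega_{l_1}$ (minimum) are smooth there and $D_\omega = \omega_{l_N} - \omega_{l_1}$. Applying the upper bound above to $\omega_{l_N}$ and the lower bound to $\omega_{l_1}$ and subtracting gives, on the interior of $I_l$,
\[
m\dot{D}_\omega(t) \le -(\omega_{l_N} - \omega_{l_1}) + (\bar{\Omega} - \underline{\Omega}) + 2N\kappa = -D_\omega(t) + D_\Omega + 2N\kappa.
\]
Because $D_\omega$ is Lipschitz continuous on $[0,+\infty)$, being the difference of the Lipschitz functions $\max_i \omega_i$ and $\min_i \omega_i$, the differential inequality $m\dot{D}_\omega + D_\omega \le D_\Omega + 2N\kappa$ then holds for almost every $t \ge 0$.

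Finally I would close the argument with a Gronwall estimate. Multiplying by the integrating factor $e^{t/m}$ and integrating from $0$ to $t$ gives $e^{t/m}D_\omega(t) - D_\omega(0) \le (D_\Omega + 2N\kappa)(e^{t/m} - 1)$, whence
\[
D_\omega(t) \le e^{-t/m} D_\omega(0) + (1 - e^{-t/m})(D_\Omega + 2N\kappa) \le D_\omega(0) + D_\Omega + 2N\kappa,
\]
which is exactly the claimed bound. The only step requiring any care is the passage from the piecewise-classical inequality on each $I_l$ to the almost-everywhere inequality across the (at most countably many) order-change times, but this is routine given the Lipschitz continuity of the diameter; I do not anticipate a genuine obstacle, as the estimate is deliberately rough and serves only as an a priori ceiling for $D_\omega$ to feed into the subsequent synchronization analysis.
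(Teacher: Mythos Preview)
Your proposal is correct and follows essentially the same approach as the paper: derive the differential inequality $m\dot{D}_\omega + D_\omega \le D_\Omega + 2N\kappa$ a.e.\ from \eqref{first_w_system} via the trivial bound on the coupling term, then integrate with the factor $e^{t/m}$. The paper presents these steps more tersely, but the substance is identical.
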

\begin{proof}
From \eqref{first_w_system}, we immediately obtain the following differential inequality of $D_\omega(t)$
\begin{equation*}
m \dot{D}_\omega(t) + D_\omega(t) \leq  D_\Omega + 2 N \kappa , \quad\text{a.e.}\  t > 0.
\end{equation*} 
It's obvious that $D_\omega(t)$ is Lipschitz continuous and thus we have 
\begin{equation*}
D_\omega(t)  \leq  e^{-\frac{t}{m}}D_\omega(0)+\int_0^t e^{-\frac{t-s}{m}} \frac{(D_\Omega + 2 N \kappa)}{m}ds \leq D_\omega(0)+ D_\Omega + 2 N \kappa.
\end{equation*} 
\end{proof}
Remark that Lemma \ref{4-1} shows a uniform bound of the frequency diameter, but it is insufficient to prove the emergence of synchronization due to the lack of dissipation. In the sequel, we will follow Section \ref{sec:3} to construct a proper energy function to yield the dissipation structure. As we discussed earlier, in Lemma \ref{small_region}, we already proved that $D_\theta(t)<D^\infty<\frac{\pi}{2}$ for $t\geq t_*$. Therefore, in this section, we will study the asymptotic behavior of the solution on $t\in [t_*,+\infty)$. Similarly, due to the analyticity of solution, we split the time interval $[t_*,+\infty)$ into a union of infinitely countable intervals:
\begin{equation}\label{divide_time2}
[t_*, + \infty) = \bigcup_{l=1}^{+\infty} T_l, \quad T_l = [\tilde{t}_{l-1},\tilde{t}_l), \quad \tilde{t}_0 = t_*.
\end{equation}

%


Note that in Lemma \ref{first_P_eq}, we use equation \eqref{first_w_system} to estimate $P(t)$ and obtain differential inequality \eqref{first_P_differential}. However, the dissipation from the damping term in \eqref{first_P_differential}  is not enough to yield the emergence of synchronization. 
Therefore, we intend to consider the following second-order equation of $\omega_i$
\begin{equation}\label{w_KMI2}
m \ddot{\omega}_i(t) + \dot{\omega}_i(t) = \kappa \sum_{j \in \mathcal{N}_i} \cos(\theta_j(t) - \theta_i(t) + \alpha)(\omega_j(t) - \omega_i(t)), \quad t \ge t_*.
\end{equation}
Based on Lemma \ref{small_region} and for small $\alpha$, we see that the right-hand-side of the equation \eqref{w_KMI2} performs like a Cucker-Smale type dissipation,  which implies the following lemma.
\begin{lemma}\label{second_tildeP_eq}
Let $\theta(t)$ be a solution to system \eqref{KMI} on a strongly connected digraph. Moreover, we suppose the conditions in Lemma \ref{small_region} are fulfilled.
Then, 
we have
\begin{equation}\label{second_P_differential}
m \ddot{  P}(t) + \dot{  P}(t)  \le - \frac{2\eta \kappa\cos (D^\infty+\alpha) }{M_N} P(t), \quad t \in T_l, \ l =1,2,\cdots,
\end{equation}
where $T_l$ is defined in \eqref{divide_time2}.
\end{lemma}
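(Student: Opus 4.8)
The plan is to prove Lemma~\ref{second_tildeP_eq} by mimicking the argument of Lemma~\ref{second_Q_eq}, but applied to the second-order frequency equation \eqref{w_KMI2} rather than the phase equation. The crucial structural difference is that the right-hand side of \eqref{w_KMI2} is a genuine Cucker--Smale type coupling: each term is $\kappa \cos(\theta_j-\theta_i+\alpha)(\omega_j-\omega_i)$, which is a \emph{linear} function of the frequency differences with strictly positive coefficients, because Lemma~\ref{small_region} guarantees $D_\theta(t)<D^\infty$ and hence $|\theta_j-\theta_i+\alpha|<D^\infty+\alpha<\frac{\pi}{2}$, so that $\cos(\theta_j-\theta_i+\alpha)\ge \cos(D^\infty+\alpha)>0$ for all $t\ge t_*$. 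This positivity is what will ultimately produce a purely dissipative inequality with \emph{no} forcing term on the right, in contrast to \eqref{second_Q_differential}.

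First I would fix a time interval $T_l$ in which the ordering of the frequencies is frozen, and relabel so that $\omega_1(t)\le \omega_2(t)\le\cdots\le\omega_N(t)$, so that $P(t)=\bar\omega(t)-\underline\omega(t)$ with $\bar\omega,\underline\omega$ the convex combinations built from the weights $M_i$ as in \eqref{coef-convex}. As in \eqref{D-1}, I would write
\begin{equation*}
m\ddot P(t)+\dot P(t) = \bigl(m\ddot{\bar\omega}(t)+\dot{\bar\omega}(t)\bigr) - \bigl(m\ddot{\underline\omega}(t)+\dot{\underline\omega}(t)\bigr),
\end{equation*}
and then estimate the dynamics of $\bar\omega$ and $\underline\omega$ separately. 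For the top combination I would start from \eqref{w_KMI2}, split each neighbor sum into the part with $j\le i$ and the part with $j>i$, bound $\cos(\theta_j-\theta_i+\alpha)\ge\cos(D^\infty+\alpha)$ on the negative (ordered) contributions, and carry out exactly the induction from \eqref{D-2-3}--\eqref{D-2-8}. The positive overshoot terms $\sum_{j>i}$ are controlled by the telescoping weight identity $M_{j+1}=(c+N-1-j)M_j$ together with strong connectivity (the same reachability argument as in \eqref{D-2-5}, referencing Lemma~4.1 of \cite{Z-Z23}), now applied to frequency differences $\omega_j-\omega_i$ instead of $\sin(\theta_j-\theta_i)$. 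Since the coupling in \eqref{w_KMI2} is already linear, the factor $\frac{\sin\gamma}{\gamma}$ that appeared in the phase case is replaced simply by $1$, and the induction closes whenever $c>\frac{N}{\cos(D^\infty+\alpha)}$, which is precisely the second condition imposed on $c$ in \eqref{c_set}. This yields
\begin{equation*}
m\ddot{\bar\omega}(t)+\dot{\bar\omega}(t) \le -\frac{\eta\kappa\cos(D^\infty+\alpha)}{M_N}\,D_\omega(t),
\end{equation*}
and a symmetric estimate with the reversed inequality for $\underline\omega$.

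Subtracting the two estimates and using $P(t)\le D_\omega(t)$ from \eqref{P_Dw_equiv} gives \eqref{second_P_differential} directly, with no residual forcing term. I expect the main obstacle to be the bookkeeping in the induction step: one must verify that the analogue of the sign condition \eqref{D-2-7} holds in the linear setting, i.e.\ that $(c-2+N-j)\cos(D^\infty+\alpha)-(N-j)\ge 0$ for all $1\le j\le N-1$, which is exactly where the constraint $c>\frac{N}{\cos(D^\infty+\alpha)}$ is used and where the absence of the $\frac{\sin\gamma}{\gamma}$ factor must be handled carefully. A secondary technical point is the rigorous justification that $\cos(\theta_j-\theta_i+\alpha)$ stays uniformly bounded below on the whole interval $[t_*,+\infty)$: this follows from Lemma~\ref{small_region} combined with the standing assumption $D^\infty+\alpha<\frac{\pi}{2}$, but it should be invoked explicitly before the estimate is carried out. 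Since the remaining manipulations parallel Lemma~\ref{second_Q_eq} almost verbatim, I would state the $\underline\omega$ estimate as "by similar arguments" and omit its repetition.
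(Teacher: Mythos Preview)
Your proposal is correct and follows essentially the same approach as the paper's own proof: fix an interval $T_l$ with frozen frequency ordering, exploit the bound $\cos(\theta_j-\theta_i+\alpha)\ge\cos(D^\infty+\alpha)>0$ coming from Lemma~\ref{small_region} and the assumption $D^\infty+\alpha<\frac{\pi}{2}$, and then rerun the weighted induction of Lemma~\ref{second_Q_eq} on the linear coupling in \eqref{w_KMI2}, using the constraint $c>\frac{N}{\cos(D^\infty+\alpha)}$ from \eqref{c_set} to close the sign condition. The paper carries out exactly this argument, including the same appeal to strong connectivity for the telescoping step and the same ``by similar arguments'' treatment of $\underline\omega$.
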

\begin{proof}
We pick out any time interval $T_l$, and for $t \in T_l$, we can find some permutation $l_1l_2\cdots l_N$ of $\{1,2,\cdots,N\}$ such that oscillators' frequencies are well-ordered as below
\begin{equation}\label{w_random_order4}
\omega_{l_1}(t) \le \omega_{l_2}(t) \le \cdots \le \omega_{l_N}(t), \quad t \in T_l.
\end{equation}
For convenience, we assume $l_i = i$ in \eqref{w_random_order4} without loss of generality, that is, 
\begin{equation*}\label{w_regular_order4}
\omega_{1}(t) \le \omega_{2}(t) \le \cdots \le \omega_{N}(t), \quad t \in T_l,
\end{equation*}
if necessary, we may still take the form \eqref{w_random_order4}.
Then, from the definition of $  P(t)$ in \eqref{P_function}, we have
\begin{equation}\label{I-1}
m \ddot{  P}(t) + \dot{  P}(t) 
= (m\ddot{\bar{\omega}}(t) +\dot{\bar{\omega}}(t) ) - (m\ddot{\underline{\omega}}(t) + \dot{\underline{\omega}}(t)).
\end{equation}
Next, we split the proof into the following two steps. \\

\noindent $\bullet$ {\bf Step 1: (Dynamical estimates of $\bar{\omega}(t)$)} From the definition of $\bar{\omega}$, we have 
\begin{equation}\label{D-4-2}
\bar{\omega}(t)=\frac{\sum\limits_{i=1}^N M_i \omega_{i}}{\sum\limits_{i=1}^N M_i}=\frac{\sum\limits_{i=1}^{N-1} \frac{M_i}{M_N} \omega_{i}+\omega_N}{\sum\limits_{i=1}^N \frac{M_i}{M_N}},\quad \frac{M_i}{M_N}=\frac{1}{\prod\limits_{j=i}^{N-1} (c+N-1-j)},\quad 1\leq i\leq N-1.
\end{equation}
Based on the result in Lemma \ref{small_region}, in order to estimate the convex combination in \eqref{D-4-2}, we let $\alpha$ be small so that  $D_\theta(t)+\alpha<D^\infty+\alpha<\frac{\pi}{2}$ for $t\geq t_*$. Such $\alpha$ can be found since $D^\infty<\frac{\pi}{2}$. Then, we apply similar calculations as in \eqref{D-2-1} to imply that
\begin{equation}\label{I-2}
\begin{aligned}
m \ddot{\omega}_i(t) + \dot{\omega}_i(t) &= \kappa \sum_{j \in \mathcal{N}_i} \cos (\theta_j(t) - \theta_i(t) + \alpha) (\omega_j(t) - \omega_i(t))\\
&\le \kappa \cos (D^\infty+\alpha) \sum_{\substack{j \in \mathcal{N}_{i} \\ j \le i}} (\omega_j(t) - \omega_i(t)) +\kappa\sum_{\substack{j \in \mathcal{N}_{i} \\ j > i}} (\omega_j(t) - \omega_i(t))\\
&\le \kappa \cos (D^\infty+\alpha) \min_{\substack{j \in \mathcal{N}_{i} \\ j \le i}} (\omega_j(t) - \omega_i(t)) +\kappa\sum_{\substack{j \in \mathcal{N}_{i} \\ j > i}} (\omega_j(t) - \omega_i(t)), \quad t \in T_l.
\end{aligned}
\end{equation}
Similar as in Lemma \ref{second_Q_eq}, we will use method of induction to control the last positive part in above estimates. 
Firstly, when $i=N$, we apply \eqref{I-2} and immediately have that 
\begin{equation}\label{D-4-3}
m \ddot{\omega}_N(t) + \dot{\omega}_N(t) \le \kappa \cos (D^\infty+\alpha) \min_{\substack{j \in \mathcal{N}_{N} \\ j \le N}} (\omega_j(t) - \omega_N(t)) , \quad t \in T_l.
\end{equation}
Now, we claim that
\begin{equation}\label{D-4-4}
\sum_{i=p+1}^{N} \frac{M_i}{M_N} (m\ddot{\omega}_{i}(t)+\dot{\omega}_i(t)) \leq   \kappa  \cos (D^\infty+\alpha) \frac{M_{p+1}}{M_N}\sum_{i = p+1}^N  \min_{\substack{k \in \mathcal{N}_i \\ k \le i} } (\omega_k(t) - \omega_i(t)).
\end{equation}
According to \eqref{D-4-3}, the above claim \eqref{D-4-4} holds for the first step $p=N-1$. Next, we suppose \eqref{D-4-4} holds for $p=j$, and then verify \eqref{D-4-4} holds for the next step $p=j-1$. 
For this, we combine \eqref{I-2} and \eqref{D-4-4} to have  
\begin{align}
&\sum_{i=j}^{N} \frac{M_i}{M_N} (m\ddot{\omega}_{i}(t)+\dot{\omega}_i(t))\label{D-4-5}\\
&\leq  \kappa\cos (D^\infty+\alpha) \frac{M_{j}}{M_N}\sum_{i = j}^N \min_{\substack{k \in \mathcal{N}_i \\ k \le i} } (\omega_k(t) - \omega_i(t))\notag\\
&\quad +\kappa \cos (D^\infty+\alpha) \frac{M_{j+1}-M_j}{M_N}\sum_{i = j+1}^N  \min_{\substack{k \in \mathcal{N}_i \\ k \le i} } (\omega_k(t) - \omega_i(t)) + \kappa \frac{M_{j}}{M_N}  \sum_{\substack{k \in \mathcal{N}_{j} \\ k > j}} (\omega_k(t) - \omega_j(t)).\notag 
\end{align}
Now, as we choose $c >\frac{N}{\cos(D^\infty+\alpha)}$, we can apply similar criterion in \eqref{D-2-5}, \eqref{D-2-6} and \eqref{D-2-7} to prove that the third line of \eqref{D-4-5} is non-positive. Since the proof is almost the same, we omit the details. Therefore, \eqref{D-4-5} implies that 
\[\sum_{i=j}^{N} \frac{M_i}{M_N} (m\ddot{\omega}_{i}(t)+\dot{\omega}_i(t))\leq \kappa\cos (D^\infty+\alpha) \frac{M_{j}}{M_N}\sum_{i = j}^N \min_{\substack{k \in \mathcal{N}_i \\ k \le i} } (\omega_k(t) - \omega_i(t)),\]
which together with the mathematical induction shows that \eqref{D-4-4} holds for $0 \le p \le N-1$, and especially for $p=0$, we follow \eqref{D-4-2} and \eqref{D-2-8} to conclude that 
\begin{equation}\label{D-4-6}
\begin{aligned}
(m\ddot{\bar{\omega}}(t) +\dot{\bar{\omega}}(t) )
&\leq \kappa\cos (D^\infty+\alpha) \frac{M_{1}}{M_N \sum\limits_{i=1}^N \frac{M_i}{M_N}}\sum_{i = 1}^N \min_{\substack{k \in \mathcal{N}_i \\ k \le i} } (\omega_k(t) - \omega_i(t)) \\
&\leq -  \frac{\eta \kappa\cos (D^\infty+\alpha)}{M_N}D_\omega(t).
\end{aligned}
\end{equation}

\noindent $\bullet$ {\bf Step 2: (Dynamical estimates of $\underline{\omega}(t)$ and $P(t)$)} Similar to the arguments in the previous step, we can prove that 
\begin{equation}\label{D-4-7}
\begin{aligned}
(m\ddot{\underline{\omega}}(t) +\dot{\underline{\omega}}(t) ) \geq \frac{\eta \kappa\cos (D^\infty+\alpha) }{M_N} D_\omega(t).
\end{aligned}
\end{equation}
Therefore, we combine \eqref{I-1}, \eqref{D-4-6} and \eqref{D-4-7} to obtain that 
\begin{align*}
m \ddot{  P}(t) + \dot{  P}(t) 
&\leq - \frac{2\eta \kappa\cos (D^\infty+\alpha)}{M_N} D_\omega(t) \leq - \frac{2\eta \kappa\cos (D^\infty+\alpha)}{M_N} P(t),\quad t\in T_l.
\end{align*}
Due to the arbitrary choice of $T_l$, the proof is finished.
\end{proof}

Similarly, the above estimate of the dynamics of $  P(t)$ in Lemma \ref{second_tildeP_eq} does not suffice to conclude the frequency synchronization. The main difficulty lies in the second-order derivative of $  P(t)$ in \eqref{second_P_differential}, which indeed is a convex combination of $\ddot{\omega}_i$. Hence, in what follows, we will provide a rough estimate on the dynamics of $ B(t)$ defined in \eqref{tildeB_function}, which can control the diameter of jerks (the derivatives of accelerations). For this, we observe from \eqref{w_KMI2} that
\begin{equation}\label{first_a_system2}
m\dot{a}_i(t) + a_i(t) = \kappa \sum_{j \in \mathcal{N}_i} \cos (\theta_j(t) - \theta_i(t) + \alpha) (\omega_j(t) - \omega_i(t)), \quad t \ge t_*.
\end{equation}
Directly differentiating on both sides of \eqref{first_a_system2}, we obtain that
\begin{equation*}
\begin{aligned}
m \ddot{a}_i(t) + \dot{a}_i(t) &= -\kappa \sum_{j \in \mathcal{N}_i} \sin (\theta_j(t) - \theta_i(t) + \alpha) (\omega_j(t) - \omega_i(t))^2\\
&\quad+\kappa \sum_{j \in \mathcal{N}_i} \cos (\theta_j(t) - \theta_i(t) + \alpha) (a_j(t) - a_i(t)).
\end{aligned}
\end{equation*}
Moreover, recalling $b_i(t) = \dot{a}_i(t)$, one has
\begin{equation}\label{first_b_system2}
\begin{aligned}
m\dot{b}_i(t) + b_i(t) = -\kappa \sum_{j \in \mathcal{N}_i} \sin (\theta_j - \theta_i + \alpha) (\omega_j - \omega_i)^2+\kappa \sum_{j \in \mathcal{N}_i} \cos (\theta_j - \theta_i + \alpha) (a_j - a_i), \quad t \ge t_*.
\end{aligned}
\end{equation}

\begin{lemma}\label{first_tildeB_eq}
Let $\theta(t)$ be a solution to system \eqref{KMI}. 
Then we have
 \begin{equation*}\label{first_b_differential}
m \dot{ B}(t) +  B(t) \le 2\kappa N (  D_\omega(0)+ D_\Omega + 2 N \kappa) \frac{P(t)}{\eta} + 2\kappa N \frac{A(t)}{\eta}, \quad t \in T_l, \ l=1,2,\cdots,
\end{equation*}
where $T_l$ is defined in \eqref{divide_time2}.
\end{lemma}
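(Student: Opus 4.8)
The plan is to mirror the argument of Lemma \ref{first_A_eq}, exploiting the fact that the jerk equation \eqref{first_b_system2} admits a bound on $m\dot{b}_i + b_i$ that is \emph{uniform in} $i$, so that the same bound automatically propagates to the convex combinations $\bar{b}$ and $\underline{b}$. First I would fix a time interval $T_l$, order the jerks as $b_{q_1}(t) \le \cdots \le b_{q_N}(t)$, and estimate the right-hand side of \eqref{first_b_system2} crudely. Bounding $|\sin(\theta_j - \theta_i + \alpha)| \le 1$ and $|\cos(\theta_j - \theta_i + \alpha)| \le 1$, using $(\omega_j - \omega_i)^2 \le D_\omega(t)^2$ and $|a_j - a_i| \le D_a(t)$, and counting at most $N$ neighbors in each sum, this yields the two-sided estimate
\begin{equation*}
|m\dot{b}_i(t) + b_i(t)| \le \kappa N D_\omega(t)^2 + \kappa N D_a(t), \quad 1 \le i \le N.
\end{equation*}

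Since this bound is independent of the index $i$ and both $\bar{b}$ and $\underline{b}$ are convex combinations of the $b_i$, the same estimate holds with $b_i$ replaced by either $\bar{b}$ or $\underline{b}$. Recalling $B(t) = \bar{b}(t) - \underline{b}(t)$ from \eqref{tildeB_function}, I would then subtract the two bounds to obtain
\begin{equation*}
m\dot{B}(t) + B(t) = \left(m\dot{\bar{b}} + \bar{b}\right) - \left(m\dot{\underline{b}} + \underline{b}\right) \le 2\kappa N D_\omega(t)^2 + 2\kappa N D_a(t).
\end{equation*}

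The key step that converts this rough estimate into the stated inequality is to \emph{linearize} the quadratic frequency term. Applying the uniform frequency bound of Lemma \ref{4-1}, namely $D_\omega(t) \le D_\omega(0) + D_\Omega + 2N\kappa$, I would write $D_\omega(t)^2 \le (D_\omega(0) + D_\Omega + 2N\kappa)\, D_\omega(t)$, and then invoke the diameter equivalences $D_\omega(t) \le \eta^{-1} P(t)$ from \eqref{P_Dw_equiv} and $D_a(t) \le \eta^{-1} A(t)$ from \eqref{A_Da_equiv} to conclude
\begin{equation*}
m\dot{B}(t) + B(t) \le 2\kappa N (D_\omega(0) + D_\Omega + 2N\kappa)\frac{P(t)}{\eta} + 2\kappa N \frac{A(t)}{\eta}, \quad t \in T_l.
\end{equation*}
As $T_l$ was arbitrary, this establishes the claim. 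The only genuinely delicate point is this linearization: the jerk equation carries the quadratic term $(\omega_j - \omega_i)^2$, which cannot be controlled by $P(t)$ alone, so the a priori uniform bound of Lemma \ref{4-1} is essential to trade one factor of $D_\omega(t)$ for a constant. All remaining steps are routine and run exactly parallel to the proof of Lemma \ref{first_A_eq}.
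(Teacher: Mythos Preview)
Your proposal is correct and follows essentially the same route as the paper: bound $m\dot{b}_i + b_i$ uniformly in $i$ by $\kappa N D_\omega(t)^2 + \kappa N D_a(t)$, pass to the convex combinations $\bar{b}$ and $\underline{b}$, subtract, then linearize the quadratic term via Lemma \ref{4-1} and convert diameters to $P$ and $A$ through \eqref{P_Dw_equiv} and \eqref{A_Da_equiv}. The only cosmetic difference is the order in which the paper applies \eqref{P_Dw_equiv} and Lemma \ref{4-1}, which is immaterial.
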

\begin{proof}
We pick out any time interval $T_l$, and for $t \in T_l$, there exists some permutation $q_1q_2\cdots q_N$ of $\{1,2,\cdots,N\}$ such that oscillators' jerks are sorted like
\begin{equation*}\label{b_random_order4}
b_{q_1}(t) \le b_{q_2}(t) \le \cdots \le b_{q_N}(t).
\end{equation*}
According to \eqref{tildeB_function}, it is clear to see that
\begin{equation}\label{J-1}
m \dot{ B}(t) +  B(t) 
= \left(m\dot{\bar{b}}(t) + \bar{b}(t) \right) - \left( m\dot{\underline{b}}(t) + \underline{b}(t)\right).
\end{equation}
From \eqref{first_b_system2}, we immediately obtain that 
\[m\dot{\bar{b}}(t) + \bar{b}(t)\leq \kappa N (D_\omega(t))^2 + \kappa N D_a(t),\]
\[m\dot{\underline{b}}(t) + \underline{b}(t)\geq - \kappa N (D_\omega(t))^2 -\kappa N D_a(t).\]
Therefore, we recall Lemma \ref{Z_Dz_equiv} and have 
\begin{align*}
&m \dot{ B}(t) +  B(t) \le 2\kappa N (D_\omega(t))^2 + 2\kappa N D_a(t) \le 2\kappa N D_\omega(t)\frac{P(t)}{\eta} + 2\kappa N \frac{A(t)}{\eta}.
\end{align*}
Finally, we apply the estimate of $D_\omega(t)$ in Lemma \ref{4-1} to obtain that 
  \[    m \dot{ B}(t) +  B(t) \le 2\kappa N (  D_\omega(0)+ D_\Omega + 2 N \kappa) \frac{P(t)}{\eta} + 2\kappa N \frac{A(t)}{\eta}, \quad t \in T_l.\]
 Due to the arbitrary choice of $T_l$, we finishes the proof.
\end{proof}

Now we combine Lemma \ref{second_tildeP_eq}, Lemma \ref{first_tildeB_eq} and Lemma \ref{first_A_eq} to gain the dissipation. More precisely, we introduce the following energy function
\begin{equation}\label{energy2}
\mathcal{E}_2(t) :=   P(t) + \frac{  \eta^2 \cos (D^\infty+\alpha)}{2NM_N} mA(t) + 2m^2 B(t)  , \quad t \ge t_*,
\end{equation}
and will show the exponential decay of $\mathcal{E}_2(t)$. 
\begin{lemma}\label{E2_eq}
Let $\theta(t)$ be a solution to system \eqref{KMI} on a strongly connected digraph. 
Moreover, we suppose the conditions in Lemma \ref{small_region} are fulfilled.
Then, we have
\[D_\omega(t)  \le \frac{\mathcal{E}_2(t_*)}{ \eta}  e^{- \tilde{\Lambda}(t- t_*)}, \quad t \ge t_*,\] 
where $\tilde{\Lambda}$ is a positive constant depending on system parameters.
%
%
%
\end{lemma}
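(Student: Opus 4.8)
The plan is to reproduce, almost verbatim, the architecture of Lemma~\ref{E1_eq}, but now on the half-line $[t_*,+\infty)$ and with the energy $\mathcal{E}_2$ from \eqref{energy2}, whose three building blocks $P$, $A$, $B$ are already controlled: the second-order dissipative inequality for $P$ (Lemma~\ref{second_tildeP_eq}), the first-order inequality for $A$ (Lemma~\ref{first_A_eq}), and the first-order inequality for $B$ (Lemma~\ref{first_tildeB_eq}). I would first work on a single interval $T_l$ of the decomposition \eqref{divide_time2}, where all orderings are frozen and the three lemmas apply simultaneously, establish a differential inequality there, and then extend it to almost every $t\ge t_*$ by invoking the (global) Lipschitz continuity of $\mathcal{E}_2$, exactly as was done at the end of Step~1 of Lemma~\ref{E1_eq}.

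The core computation is the convex combination of the three inequalities dictated by the weights appearing in \eqref{energy2}: keep Lemma~\ref{second_tildeP_eq} as is, multiply Lemma~\ref{first_A_eq} by $\frac{\eta^2\cos(D^\infty+\alpha)}{2NM_N}$, and multiply Lemma~\ref{first_tildeB_eq} by $2m$. Summing, the left-hand side reconstitutes $\frac{d}{dt}\mathcal{E}_2(t)$ together with the residual terms $m\ddot P(t)$, $\frac{\eta^2\cos(D^\infty+\alpha)}{2NM_N}A(t)$ and $2mB(t)$. The weight on the $A$-inequality is chosen precisely so that the $P$-term it produces, $\frac{\eta\kappa\cos(D^\infty+\alpha)}{M_N}P$, cancels exactly half of the dissipation $-\frac{2\eta\kappa\cos(D^\infty+\alpha)}{M_N}P$ coming from Lemma~\ref{second_tildeP_eq}, leaving a genuine $-\frac{\eta\kappa\cos(D^\infty+\alpha)}{M_N}P$ available to absorb the positive $P$- and $A$-contributions generated by the $B$-inequality.

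The one new ingredient beyond Lemma~\ref{E1_eq} is disposing of the second-order residual $m\ddot P(t)$. Since $\ddot P=\ddot{\bar\omega}-\ddot{\underline\omega}$ is a difference of convex combinations of the jerks $b_i=\ddot\omega_i$, while $B=\bar b-\underline b$ is by construction the extreme such combination, one has $B(t)\ge|\ddot P(t)|$ — the precise analogue of the bound $A\ge|\ddot Q|$ used before. Hence $-m\ddot P\le mB$, which turns the $-2mB$ residual into $-mB$. After collecting terms I expect to arrive, on each $T_l$, at
\[\frac{d}{dt}\mathcal{E}_2(t)\le \Big(\tfrac{4m\kappa N(D_\omega(0)+D_\Omega+2N\kappa)}{\eta}-\tfrac{\eta\kappa\cos(D^\infty+\alpha)}{M_N}\Big)P(t)+\Big(\tfrac{4m\kappa N}{\eta}-\tfrac{\eta^2\cos(D^\infty+\alpha)}{2NM_N}\Big)A(t)-mB(t),\]
where the growth bound $D_\omega(0)+D_\Omega+2N\kappa$ enters through Lemma~\ref{4-1}. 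The smallness hypotheses are used exactly here: condition \eqref{mk_con4} is equivalent to the $P$-coefficient being strictly negative, and condition \eqref{mk_con1} is equivalent to the $A$-coefficient being strictly negative.

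With all three coefficients negative, I would finish by comparing the right-hand side against $-\tilde\Lambda\mathcal{E}_2$ term by term: matching the $P$, $A$, $B$ weights against those in \eqref{energy2} produces a positive $\tilde\Lambda$ equal to the minimum of the three resulting ratios (structurally the analogue of $\Lambda$ in \eqref{Lambda}, now with $\cos(D^\infty+\alpha)$ in place of $\frac{\sin\gamma}{\gamma}$), yielding $\frac{d}{dt}\mathcal{E}_2(t)\le-\tilde\Lambda\mathcal{E}_2(t)$ a.e. on $[t_*,+\infty)$. Grönwall's inequality then gives $\mathcal{E}_2(t)\le\mathcal{E}_2(t_*)e^{-\tilde\Lambda(t-t_*)}$, and since $P,A,B\ge0$ we have $P(t)\le\mathcal{E}_2(t)$, so \eqref{P_Dw_equiv} finally yields $D_\omega(t)\le\frac1\eta P(t)\le\frac{\mathcal{E}_2(t_*)}{\eta}e^{-\tilde\Lambda(t-t_*)}$. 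I anticipate the main obstacle to be purely the bookkeeping of the coefficient signs: one must verify that the explicit constants in \eqref{mk_con1} and \eqref{mk_con4} simultaneously beat both the half-cancelled dissipation and the quadratic cross terms carried into the $B$-inequality, which is where the factor $M_N$ and the rough frequency bound of Lemma~\ref{4-1} have to be tracked carefully.
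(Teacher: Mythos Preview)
Your proposal is correct and follows essentially the same approach as the paper: you combine Lemmas~\ref{second_tildeP_eq}, \ref{first_A_eq}, and \ref{first_tildeB_eq} with exactly the weights $1$, $\frac{\eta^2\cos(D^\infty+\alpha)}{2NM_N}$, and $2m$, use $B(t)\ge|\ddot P(t)|$ to absorb the second-order residual, verify via \eqref{mk_con1} and \eqref{mk_con4} that the resulting $P$- and $A$-coefficients are negative, and conclude by Gr\"onwall together with \eqref{P_Dw_equiv}. The paper's own $\tilde\Lambda$ in \eqref{D-4-10} is precisely the minimum of the three ratios you describe.
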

\begin{proof}
From Lemma \ref{second_tildeP_eq}, Lemma \ref{first_tildeB_eq} and Lemma \ref{first_A_eq}, we get that for any time interval $T_l$ and $t \in T_l$ , 
\begin{align}
&m \ddot{  P}(t) + \dot{  P}(t)  \le - \frac{2\eta \kappa\cos (D^\infty+\alpha)}{M_N} P(t), \label{L-1}\\
&m\dot{A}(t) + A(t)   \le \frac{2N\kappa}{ \eta}   P(t),\label{L-2}\\
&m \dot{ B}(t) +  B(t) \le 2\kappa N (  D_\omega(0)+ D_\Omega + 2 N \kappa) \frac{P(t)}{\eta} + 2\kappa N \frac{A(t)}{\eta}. \label{L-3}
\end{align}
We multiply \eqref{L-2} by $\frac{ \eta^2 \cos (D^\infty+\alpha)}{2NM_N}$ to obtain
\begin{equation}\label{L-4}
\frac{ \eta^2 \cos (D^\infty+\alpha) }{2NM_N} m\dot{A}(t) + \frac{ \eta^2 \cos (D^\infty+\alpha) }{2NM_N} A(t) \le  \frac{\eta \kappa\cos (D^\infty+\alpha)}{M_N}   P(t).
\end{equation}
Moreover, multiplying \eqref{L-3} by $2m$, one has
\begin{equation}\label{L-5}
2m^2\dot{ B}(t) + 2m  B(t) \le 4m\kappa N (  D_\omega(0)+ D_\Omega + 2 N \kappa) \frac{P(t)}{\eta} + 4m\kappa N \frac{A(t)}{\eta}.
\end{equation}
Then, we add \eqref{L-1}, \eqref{L-4} and \eqref{L-5} together to get
\begin{equation*}
\begin{aligned}
&m \ddot{  P}(t) + \dot{  P}(t) +\frac{ m \eta^2 \cos (D^\infty+\alpha) }{2NM_N} \dot{A}(t) +\frac{ \eta^2 \cos (D^\infty+\alpha) }{2NM_N}  A(t) + 2m^2\dot{ B}(t) + 2m  B(t)\\
&\le - \frac{\eta\kappa\cos (D^\infty+\alpha)}{M_N}  P(t) +  \frac{4m\kappa N (  D_\omega(0)+ D_\Omega + 2 N \kappa)}{\eta}   P(t) + \frac{4Nm\kappa}{ \eta} A(t).
\end{aligned}
\end{equation*}
This further implies that
\begin{equation}\label{D-4-8}
\begin{aligned}
&\frac{d}{dt} \left(   P(t) + \frac{ m \eta^2 \cos (D^\infty+\alpha)}{2NM_N} A(t) + 2m^2 B(t)\right)\\
&\le - m(B(t) + \ddot{  P}(t)) - \left(\frac{  \eta^2 \cos (D^\infty+\alpha) }{2NM_N}- \frac{4Nm\kappa}{ \eta} \right)  A(t) - m  B(t)\\
&\quad - \left( \frac{\eta\kappa\cos (D^\infty+\alpha)}{M_N} - \frac{4m\kappa N (  D_\omega(0)+ D_\Omega + 2 N \kappa) }{\eta} \right)  P(t) .
\end{aligned}
\end{equation}
Due to the definition of $B(t)$ and $P(t)$, it is obvious that $B(t)\geq |\ddot{P}(t)|$. Then, we combine \eqref{D-4-8} and the similar analysis as in \eqref{F-1-3} and \eqref{F-1-5} to obtain that 
\begin{equation}\label{D-4-9}
\begin{aligned}
\frac{d}{dt}\mathcal{E}_2(t) &= \frac{d}{dt} \left(   P(t) + \frac{\eta^2 \cos (D^\infty+\alpha)}{2NM_N} mA(t) + 2m^2 B(t)\right) \\
&\leq - \tilde{\Lambda} \left(   P(t) + \frac{\eta^2 \cos (D^\infty+\alpha) }{2NM_N} mA(t) + 2m^2 B(t)\right) = - \tilde{\Lambda}  \mathcal{E}_2(t),
 \end{aligned}
\end{equation} 
where $\tilde{\Lambda}$ is given by
\begin{equation}\label{D-4-10}
\tilde{\Lambda}=\min\left\{\left(\frac{\eta\kappa\cos (D^\infty+\alpha) }{M_N} - \frac{4m\kappa N (  D_\omega(0)+ D_\Omega + 2 N \kappa) }{\eta} \right), \ \frac{\left(\frac{  \eta^2 \cos (D^\infty+\alpha)}{2NM_N}- \frac{4Nm\kappa}{ \eta} \right)}{\frac{ m \eta^2 \cos (D^\infty+\alpha) }{2NM_N} },\ \frac{1}{2m}\right\}.
\end{equation} 
As we require $m\kappa \ll 1$, \eqref{D-4-10} is non-negative, 
and thus \eqref{D-4-9} is dissipative. Then we recall Lemma \ref{Z_Dz_equiv} to obtain that 
\begin{equation*}\label{L-7}
D_\omega(t) \le \frac{1}{ \eta}   P(t) \le \frac{1}{ \eta}  \mathcal{E}_2(t) \le \frac{1}{ \eta} \mathcal{E}_2(t_*) e^{- \tilde{\Lambda} (t- t_*)}, \quad t \ge t_*.
\end{equation*}
\end{proof}

Finally, we give the proof of Theorem \ref{main}.\\

\noindent {\bf Proof of Theorem \ref{main}:} According to Lemma \ref{small_region}, under the assumptions of sufficiently large $c$ and $\kappa$ and sufficiently small $m$ and $\alpha$, we derive that there exists a finite time $t_* \ge 0$ such that
\begin{equation*}
D_\theta(t) < D^\infty < \frac{\pi}{2} , \quad \forall \  t \ge t_*.
\end{equation*}
Based on this fine estimate, the proof for the frequency synchronization result in Theorem \ref{main} is completed following from Lemma \ref{E2_eq}.

\section{Simulations}\label{sec:5}
\setcounter{equation}{0}
In this section, we present a simple example with three oscillators to perform som simulations for the synchronization result in Theorem \ref{main}. We employ the classical numerical method of fourth order Runge-Kutta. The interaction network $(\chi_{ij})$ is given as below
\begin{equation}\label{P-1}
(\chi_{ij}) = 
\begin{pmatrix}
0 & 0 & 1\\
1 & 0 & 0\\
0 & 1 & 0
\end{pmatrix}
,
\end{equation}
which is strongly connected (that is, $1 \to 2 \to 3 \to 1$).

The initial phases are randomly chosen with $D_\theta(0) = 1.0330$, and we can set $\gamma = 1.8955$ such that $D_\theta(0) < \gamma < \pi$. The initial frequencies are randomly chosen in $(-0.5, 0.5)$ with $D_\omega(0) = 0.6080$, the natural frequencies in $(-10^{-4}, 10^{-4})$ with $D_\Omega = 1.5\times 10^{-4}$. 
Moreover, we choose $D^\infty = 0.4, \ \alpha = 10^{-5} $ and $\varepsilon = 10^{-3}$ which satisfy $D^\infty + \alpha < \frac{\pi}{2}$. Then, the integer parameter $c$ in the setting \eqref{coef-convex} of convex coefficients can be determined by the relation \eqref{c_set}, and we set $c = 7$. 

Based on above choices and estimates, we set the coupling strength $K =1$ and the inertia $m = 10^{-5}$ so that the conditions \eqref{mk_con1}-\eqref{mk_con4} are satisfied 
Then, the convergence to the synchronized behavior is shown in Figure \ref{Fig1}. For ease of observation, we plot the formation in the local time interval $[12,15]$ in Figure \ref{Fig1:local_phs1} and \ref{Fig1:local_fre1} for Figure \ref{Fig1:phs1} and \ref{Fig1:fre1}, respectively.

\begin{figure}[h]
\centering
\mbox{
\subfigure[Phase-locked state]{\includegraphics[width=0.48\textwidth]{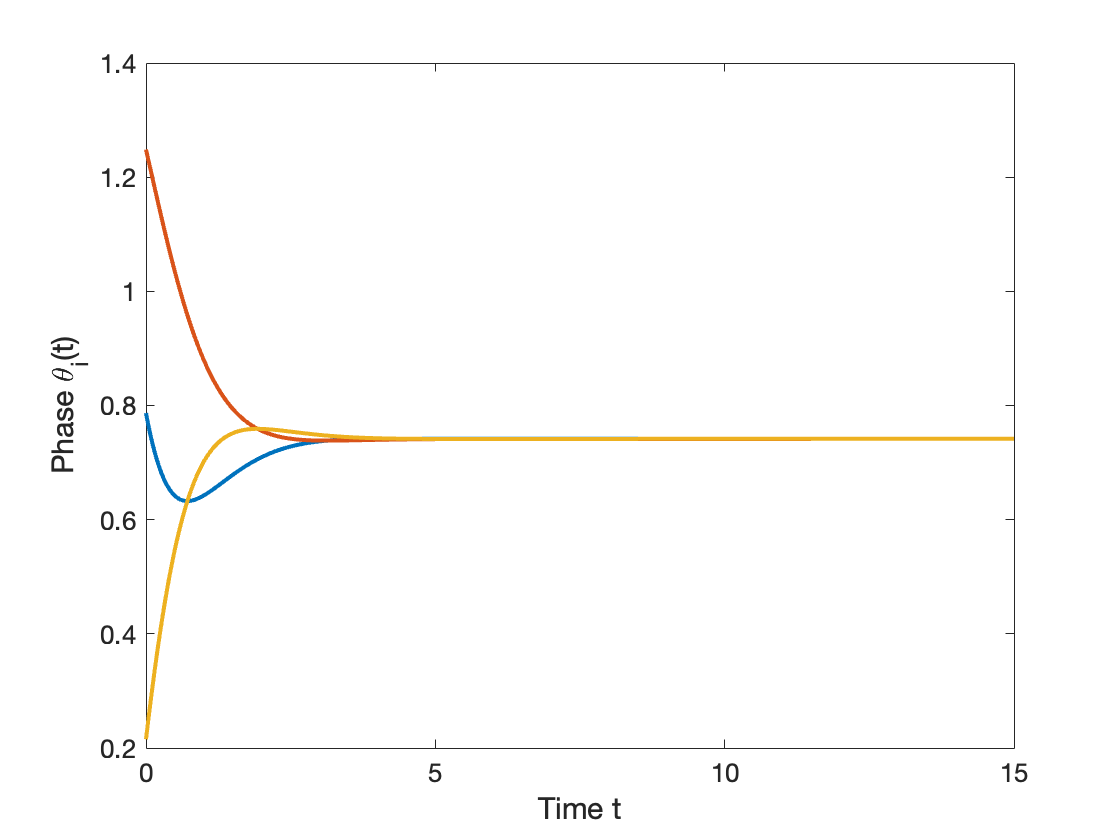}\label{Fig1:phs1}}

\subfigure[Phase-locked state on local time interval]{\includegraphics[width=0.48\textwidth]{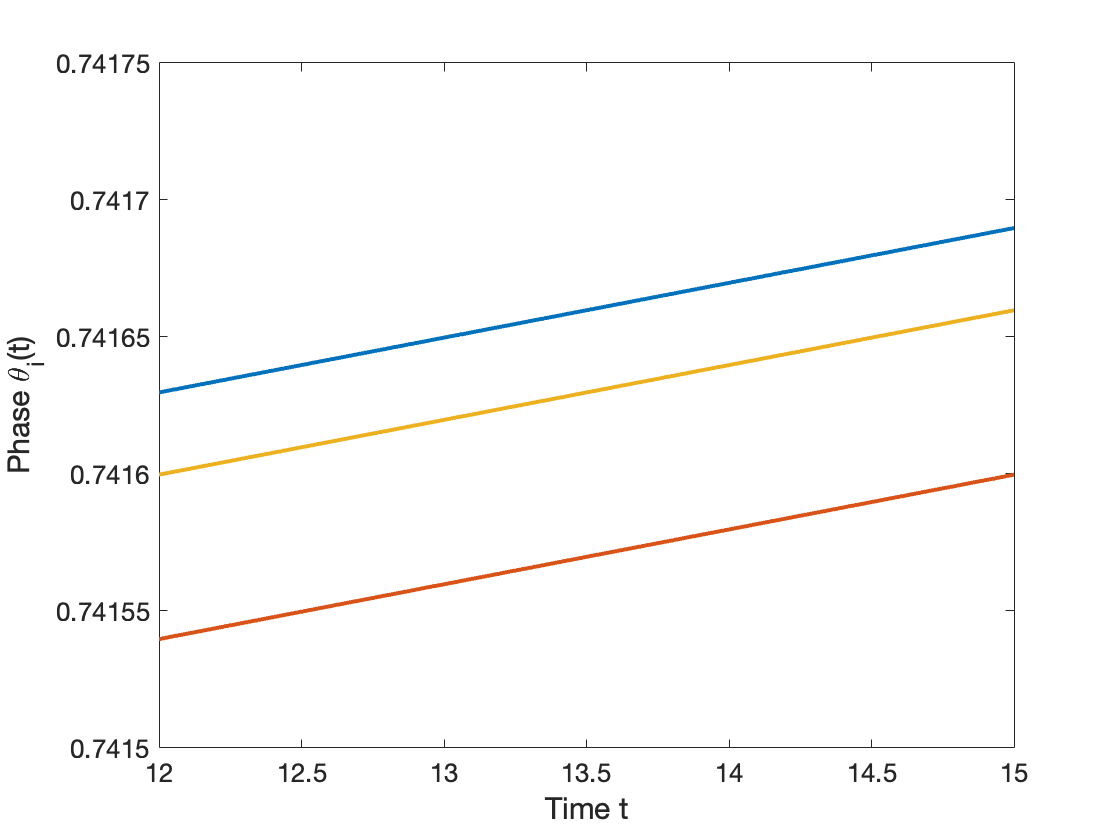}\label{Fig1:local_phs1}}
}\\

\mbox{
\subfigure[Frequency synchronization]{\includegraphics[width=0.48\textwidth]{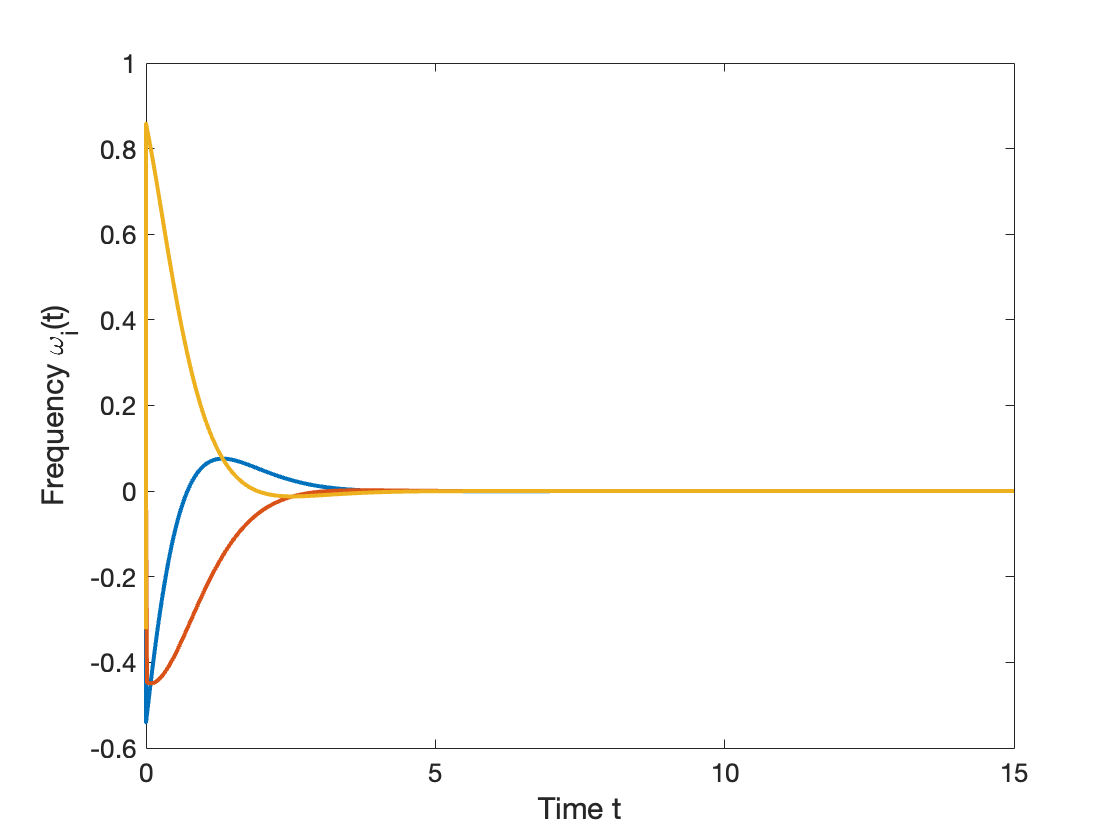}\label{Fig1:fre1}}

\subfigure[Frequency synchronization on local time interval]{\includegraphics[width=0.48\textwidth]{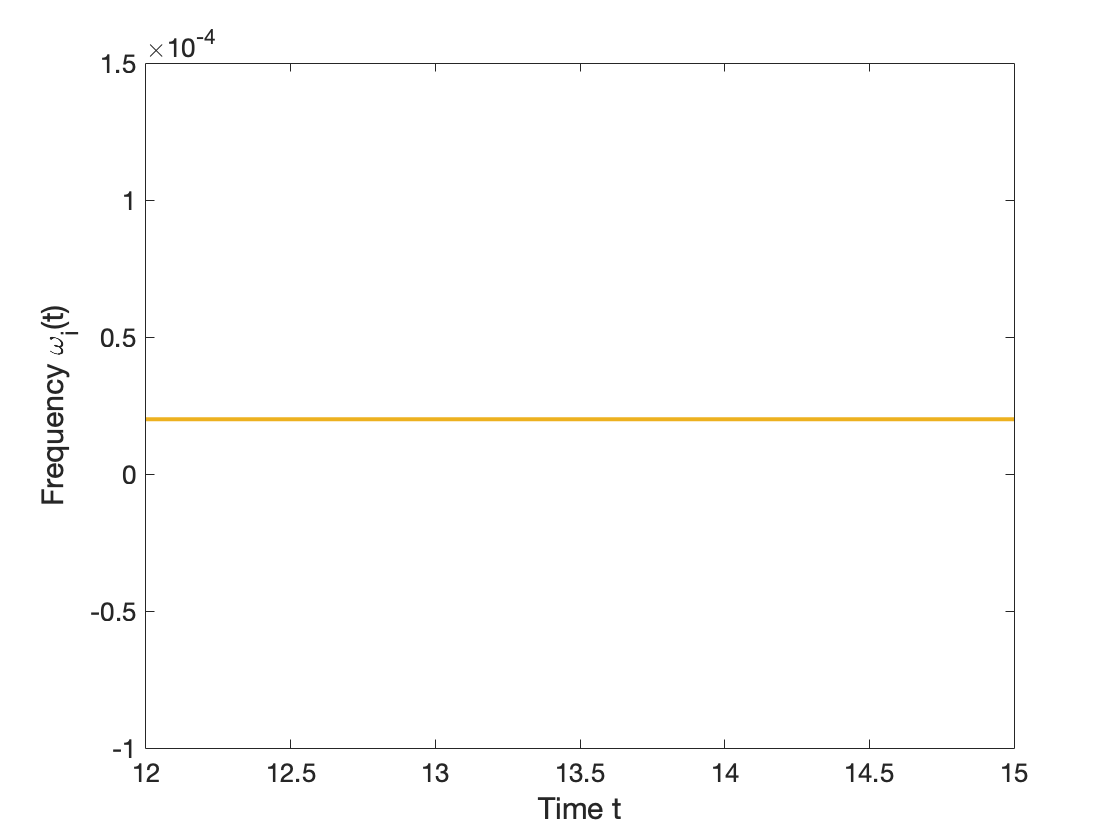}\label{Fig1:local_fre1}}
}

\caption{Complete synchronization for the second-order model \eqref{KMI} on the network \eqref{P-1}.}
\label{Fig1}
\end{figure}

\section{Summary}\label{sec:6}

In this paper, we investigated the synchronization problem of the Kuramoto model with inertia and frustration on a strongly connected digraph. The asymmetric network and the presence of frustration term imply that the standard methods for symmetric models cannot be directly applied. To address these difficulties, we developed crucial energy functions in the form of convex combinations, which are helpful to govern the phase and frequency diameters. Owing to the dissipative dynamics of these energy functions, we showed that the complete synchronization emerges exponentially fast. The parametric condition for the synchronization is explicitly presented, which corresponds to a regime of large coupling strength and small inertia and frustration. Improving the estimate of the synchronization condition remains an interesting problem, and a more general digraph will be considered in the future work.

\section*{Acknowledgments}

The work of T. Zhu is supported by the National Natural Science Foundation of China (Grant No. 12201172), the Scientific Research Foundation of Universities of Anhui Province of China (Grant No. 2022AH051790) and the Talent Research Fund of Hefei University of China (Grant No. 21-22RC23). The work of X. Zhang is supported by the National Natural Science Foundation of China (Grant No. 12471213) and the Talent Research Startup Fund of Wuhan University of China (Grant No. 2025-1301-015).

\end{document}